\newtheorem{thm}{Theorem}[section]
\newcommand{\n}{\noindent}
\newtheorem{conj}[thm]{Conjecture}
\newtheorem{lemma}[thm]{Lemma}
\newtheorem{cor}[thm]{Corollary}
\newenvironment{proof}{{\bf Proof}.}{\rule{3mm}{3mm}}
\begin{document}

\title{$Z_3$-connectivity with independent number 2 }
\author{Fan Yang \thanks{School of Mathematics and Physics, Jiangsu University of Science and Technology,
Zhenjiang, Jiangsu 212003, China. E-mail: fanyang$\_$just@163.com. Research is partially supported by NSF-China Grant:
NSFC 11326215. and is partially supported by NSF-China Grant:
NSFC 11371009.},
Xiangwen Li\thanks{Department of Mathematics
 Huazhong Normal
University, Wuhan 430079, China, Research is partially supported by NSF-China Grant:
NSFC: 11171129.},
Liangchen Li\thanks{School of Mathematical Sciences, Luoyang Normal University,
Luoyang 471022, China, Research is partially supported by NSF-China Grant:
NSFC: 11301254.}
}

\date{}
\maketitle

\begin{abstract}
Let $G$ be a 3-edge-connected graph on $n$ vertices. It is
proved in this paper that if $\alpha (G)\le 2$, then either $G$ can be
$Z_3$-contracted to one of graphs $\{K_1, K_4\}$ or $G$ is one of the graphs in Fig. 1.
\end{abstract}

\section{Introduction}

Graphs considered here are undirected, finite and may have multiple edges without
loops\cite{Bondy}. Let $G$ be a graph.
Set $D=D(G)$ be an orientation of $G$. If an edge $e=uv\in E(G)$ is
directed from a vertex $u$ to a vertex $v$, then $u$ is a tail of
$e$, $v$ is a head of $e$. For a vertex $v\in V(G)$, let
$E^+(v)(E^-(v))$ denote the set of all edges with $v$ as a tail(a
head). Let $A$ be an abelian group with the additive identity 0, and
let $A^*=A-\{0\}$.

For every mapping  $f: E(G)\rightarrow A$, the boundary of $f$ is a
function $\partial f: V(G)\rightarrow A$ defined by
\[
\partial f(v)=\sum\limits_{e\in E^{+}(v)} f(e)- \sum\limits_{e\in
E^-(v)} f(e),
\]
where ``$\sum$" refers to the addition in $A$. If $\partial f(v)=0$
for each vertex $v\in V(G)$, then $f$ is called an $A$-flow of $G$.
Moreover, if $f(e)\neq 0$ for every $e\in E(G)$, then $f$ is a
nowhere-zero $A$-flow of $G$.

A graph $G$ is {\it $A$-connected} if for any mapping $b: V(G)\rightarrow
A$ with $\sum\limits_{v\in V(G)} b(v)=0$, there exists an
orientation of $G$ and a mapping  $f: E(G)\rightarrow A^*$ such that
$\partial f(v)=b(v)$ (mod 3) for each $v\in V(G)$. The concept of
$A$-connectivity was firstly introduced by Jaeger et al in
\cite{F.Jaeger} as a generalization of nowhere-zero flows.
Obviously, if $G$ is $A$-connected, then $G$ admits a nowhere-zero
$A$-flow.

For $X\subseteq E(G)$, the contraction $G/X$ is obtained from $G$ by
contracting each edge of $X$ and deleting the resulting loops. If
$H\subseteq G$, we write $G/H$ for $G/E(H)$. Let $A$ be an abelian group with $|A|\ge 3$.
Denote by $G'$ the  graph obtained by repeatedly contracting $A$-connected
subgraphs of $G$ until no such subgraph left.  We  say $G$ can be
$A$-contracted to $G'$.  Clearly, if a graph $G$ can be $A$-contracted
to $K_1$, then  $G$ is $A$-connected.

In this paper, we focus on $Z_3$-connectivity. The following conjecture is due to
 Jaeger {\it et al.}

\begin{conj}\cite{F.Jaeger}
Every 5-edge-connected graph is $Z_3$-connected.
\end{conj}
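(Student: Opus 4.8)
This is the conjecture of Jaeger et al., which is still open, so what follows is a plan of attack rather than a claim of proof; the strongest results to date (Thomassen for $8$-edge-connectivity, and Lov\'asz, Thomassen, Wu and Zhang for $6$-edge-connectivity) already indicate where the real difficulty lies. The plan is first to pass from flows to orientations. Since $Z_3^*=\{1,2\}=\{1,-1\}$, replacing the value $2$ on an edge by reversing its direction shows that $G$ is $Z_3$-connected if and only if, for every $\beta:V(G)\to Z_3$ with $\sum_{v\in V(G)}\beta(v)\equiv |E(G)|\pmod 3$, there is an orientation $D$ of $G$ with $d_D^+(v)\equiv\beta(v)\pmod 3$ at every vertex $v$. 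This converts the existence of a nowhere-zero $Z_3$-flow with prescribed boundary into a purely combinatorial condition on out-degrees modulo $3$, which behaves well under contraction and under local edge operations.

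Next I would take a counterexample $G$ minimizing $|V(G)|+|E(G)|$: a $5$-edge-connected graph that is not $Z_3$-connected. Using the $A$-contraction recalled in the excerpt, I would contract every nontrivial $Z_3$-connected subgraph. Because contracting a connected subgraph never lowers edge-connectivity (an edge cut of $G/H$ is literally an edge cut of $G$), and because contracting a $Z_3$-connected subgraph cannot destroy $Z_3$-connectivity of the host, the resulting reduced graph $G'$ is again a $5$-edge-connected counterexample containing no $Z_3$-connected subgraph on two or more vertices. In particular $G'$ has no small reducible configuration, which by the standard reduction lemmas for $Z_3$-connectivity severely constrains its local structure and keeps its degrees close to $5$.

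The engine of the reduction is edge-splitting (lifting). By Mader's theorem one can, at a vertex of sufficiently high degree, lift a pair of incident edges while preserving the edge-connectivity of the rest of the graph; choosing the lifted pair so that the prescribed boundary $\beta$ can still be matched on the smaller graph lets me induct. Iterating such lifts should reduce the extremal counterexample to an essentially $5$-regular graph. For the $5$-regular case I would then try to produce the required orientation directly, either by an Alon--Tarsi / combinatorial Nullstellensatz computation---encoding the condition $d_D^+(v)\equiv\beta(v)\pmod 3$ by factors vanishing on the forbidden residues and showing that the resulting graph polynomial has a nonzero coefficient on a monomial of feasible degree at each vertex---or by a discharging argument balancing the local number of feasible out-degree residues against the global boundary constraint.

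The hard part is the complete absence of slack at degree $5$. In the $6$-edge-connected proof every vertex carries one edge beyond the connectivity threshold, and this single extra edge is exactly what gives both the lifting lemma and the local out-degree count room to spare. At a degree-$5$ vertex a lift drops the degree to $3$, below $5$-edge-connectivity, so the reduction to a regular graph cannot be carried out naively; and a degree-$5$ vertex offers precisely the minimum number of admissible out-degree residues, leaving zero margin in the Nullstellensatz coefficient count. Overcoming this is the crux: a successful argument must extract extra global structure from a $5$-regular, $5$-edge-connected, reducibility-free graph to compensate for the missing local slack, and finding such structure is exactly why this conjecture remains unresolved.
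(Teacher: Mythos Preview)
The paper does not prove this statement at all: it is stated as Conjecture~1.1, attributed to Jaeger \emph{et al.}, and the very next sentence in the paper reads ``It is still open.'' The paper's actual contribution is Theorem~1.3, which characterizes $Z_3$-connectivity for $3$-edge-connected graphs with independence number at most $2$; the Jaeger conjecture is quoted only as motivation and is never attacked.

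You correctly recognize this and explicitly frame your write-up as a plan of attack rather than a proof, so there is no error to flag. Your orientation reformulation matches the content of the paper's Lemma~2.5, and your remarks on the Lov\'asz--Thomassen--Wu--Zhang $6$-edge-connected result and the loss of slack at degree~$5$ accurately describe the state of the art. But since the paper contains no proof of this conjecture, there is nothing to compare your approach against; your submission is appropriate as commentary on an open problem, not as a proof to be checked.
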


It is still open. However, many authors are devoted to approach  this conjecture.
Chv\'{a}tal and Erd\H{o}s \cite{Chvatal} proved a classical result: a graph $G$ with at least 3 vertices is hamiltonian
if its independence number is less than or equal to its connectivity (this condition is known as
Chv\'{a}tal-Erd\H{o}s Condition). Therefore Chv\'{a}tal-Erd\H{o}s Condition guarantees the existence of
nowhere-zero 4-flows. Recently, Luo, Miao, Xu \cite{Luo M} characterized the graphs satisfying Chv\'{a}tal-Erd\H{o}s Condition that admit a nowhere-zero 3-flow.


\begin{center}
\includegraphics[bb=0 0 487 376, height=7 cm]{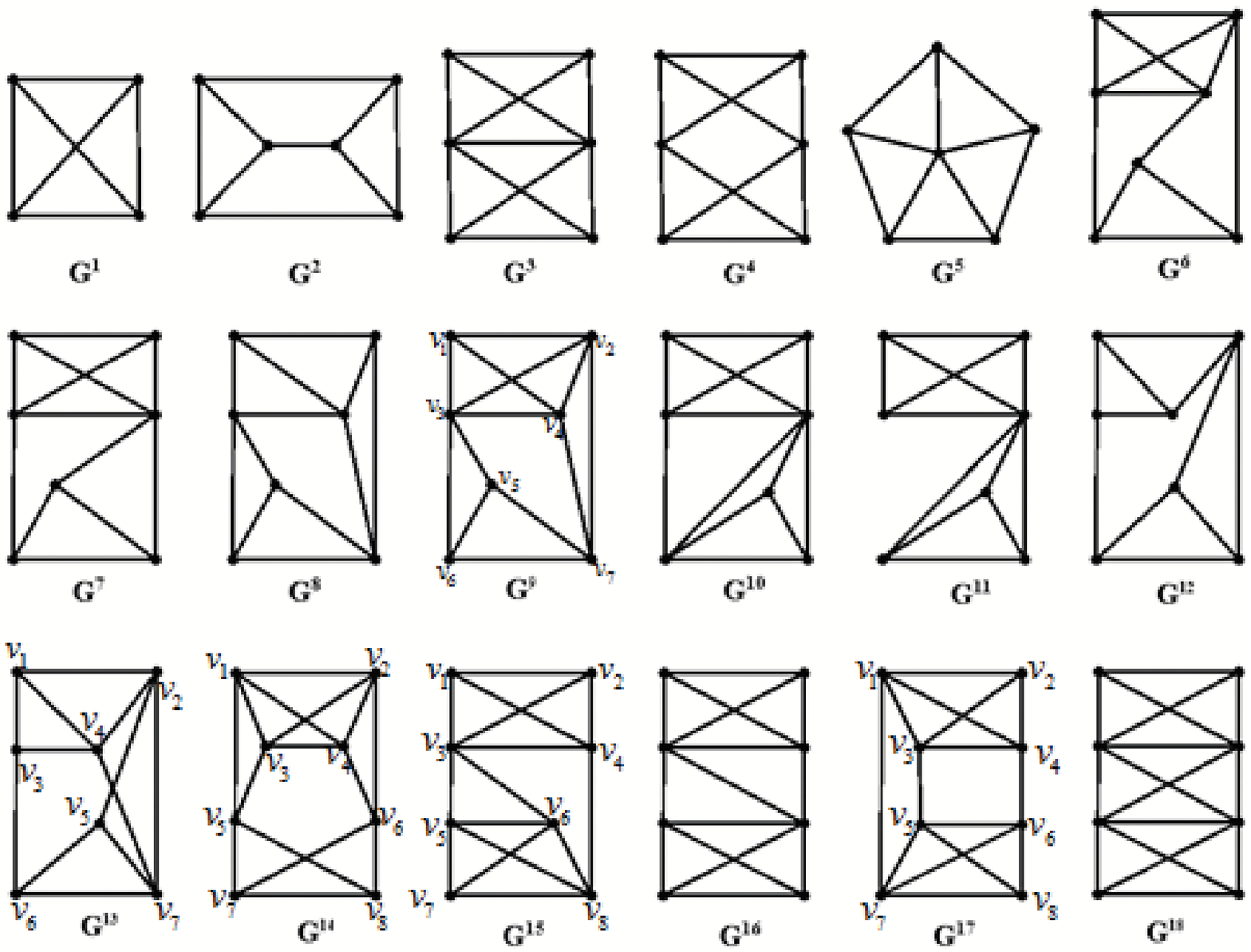}
\vskip 0.3cm
\begin{picture}(15,1)
\put(0.5,1){Fig. 1:  18 specified graphs which is $Z_3$-connected}
\end{picture}
\end{center}

\begin{thm}\label{luo}{(Luo et al. \cite{Luo M})}
Let $G$ be a bridgeless graph with independence number $\alpha(G)\le 2$. Then $G$ admits a nowhere-zero 3-flow if and
only if $G$ can not be contracted to a $K_4$ and $G$ is not one of $G^3, G^5, G^{18}$ in Fig. 1 or $G\notin {G^3}'$.
\end{thm}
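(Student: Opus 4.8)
The plan is to work with nowhere-zero $Z_3$-flows throughout, which is legitimate by Tutte's theorem that a graph has a nowhere-zero $3$-flow if and only if it has a nowhere-zero $Z_3$-flow. I will use three standard facts: (i) $K_4$ has no nowhere-zero $3$-flow; (ii) if $G$ has a nowhere-zero $3$-flow then so does every contraction $G/X$ (restrict the flow to $E(G)\setminus X$; at a merged vertex the boundary is still $0$); and (iii) the reduction lemma — if $H$ is a $Z_3$-connected subgraph of $G$, then $G$ is $Z_3$-connected if and only if $G/H$ is, and in particular a $Z_3$-connected graph has a nowhere-zero $3$-flow. The ``only if'' direction of the theorem is then quick: by (i) and (ii), a graph contractible to $K_4$ has no nowhere-zero $3$-flow; and for the finitely many graphs $G^3,G^5,G^{18}$ and the members of $(G^3)'$ one checks by hand — e.g.\ by exhibiting a contraction onto $K_4$, or by a short case analysis on modular orientations — that none admits a nowhere-zero $3$-flow.

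For the ``if'' direction I would prove the slightly stronger statement that $G$ is \emph{$Z_3$-connected}, except for a short explicit list $\mathcal{L}$ of small graphs that carry a nowhere-zero $3$-flow without being $Z_3$-connected (these, together with $K_4$ and the exceptions above, should be exactly the graphs drawn in Fig.~1). The proof is by induction on $|V(G)|$, and the engine is the fact that $\alpha(G)\le 2$ makes $\overline G$ triangle-free, hence $G$ very dense. Concretely, either some vertex $v$ has at least five non-neighbours in $G$ — then $N_{\overline G}(v)$ is an independent set of $\overline G$ of size $\ge 5$, i.e.\ a clique $K_5$ of $G$ — or else $\delta(G)\ge |V(G)|-5$.

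In the first case we contract the $K_5$, which is $Z_3$-connected. The resulting graph $G'=G/K_5$ is again bridgeless, still satisfies $\alpha(G')\le 2$ (an independent set of $G'$ either misses the contracted vertex $z$, hence is independent in $G$, or is $\{z\}$ together with vertices of $G$ adjacent to no vertex of the $K_5$, and any two such vertices plus one vertex of the $K_5$ would form an independent triple in $G$), is not contractible to $K_4$ (else $G$ would be), and is smaller; by induction it is $Z_3$-connected or in $\mathcal{L}$, and since $\mathcal{L}$ consists of small graphs, for $|V(G)|$ above a fixed threshold only the first alternative survives, whence the reduction lemma gives that $G$ is $Z_3$-connected. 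In the second case, $\delta(G)\ge|V(G)|-5$ together with $\alpha(G)\le 2$ forces high edge-connectivity once $|V(G)|$ is moderately large — a crude lower bound on any edge-cut $(S,\overline S)$ of the form $|S|\bigl(|V(G)|-4-|S|\bigr)$ gives $\lambda(G)\ge 6$ — and one invokes a known degree/edge-connectivity sufficient condition for $Z_3$-connectivity. After these reductions only boundedly many graphs remain, namely those on a bounded number of vertices, i.e.\ those whose triangle-free complement $\overline G$ has boundedly many vertices.

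The main obstacle, as usual for such results, is this bounded case analysis: for each graph one must either locate a $Z_3$-connected spanning subgraph from a catalogue of small $Z_3$-connected graphs (complete graphs $K_t$ with $t\ge 5$, $K_4$ plus a parallel edge, even wheels, subgraphs containing two parallel edges, etc.), or construct a nowhere-zero $Z_3$-flow directly, or recognize it as one of the $K_4$-contractible graphs, the exceptions $G^3,G^5,G^{18},(G^3)'$, or a member of $\mathcal{L}$. Organizing this by the structure of the triangle-free graph $\overline G$, and in particular handling the low-connectivity degeneracies — e.g.\ bridgeless graphs with a cut vertex, which can have $\alpha(G)\le 2$ and a nowhere-zero $3$-flow yet are never $Z_3$-connected — is the delicate part; one also has to be careful that the strengthened induction hypothesis ``$Z_3$-connected unless in $\mathcal{L}$'' is precisely what is both needed for the reduction step and provable in the base cases. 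By contrast, the ``dense/large'' part of the argument is robust: it reduces cleanly by pulling a $Z_3$-connected $K_5$ out of a dense $\overline G$-triangle-free graph, or by a degree-type sufficient condition when no such vertex of large non-degree exists.
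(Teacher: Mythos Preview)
This theorem is not proved in the paper at all: it is quoted as a known result of Luo, Miao, and Xu (reference~\cite{Luo M}) and serves only as background motivation for the paper's own main theorem (Theorem~\ref{th1}). There is therefore no proof in this paper against which to compare your attempt.

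For what it is worth, your sketch outlines a plausible strategy but is not close to a proof, and several steps would not go through as stated. The reduction by contracting a $K_5$ is reasonable, but the induction does not close: after contraction the smaller graph may land among the exceptions $G^3,G^5,G^{18},(G^3)'$ or in your unspecified list $\mathcal{L}$, and you give no mechanism for lifting back to a conclusion about $G$ in those cases. Your edge-cut estimate $|S|\bigl(|V(G)|-4-|S|\bigr)$ is unjustified and in any event does not yield $\lambda(G)\ge 6$ for small $|S|$. The hypothesis is only ``bridgeless,'' so graphs with $2$-edge-cuts and degree-$2$ vertices must be handled separately, which your dichotomy ignores. And the ``bounded case analysis'' you defer to at the end is precisely the substantive content of such a theorem; calling it a delicate finite check does not discharge it.
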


Motivated by this, we consider the $Z_3$-connectivity of graphs satisfying the weaker Chv\'{a}tal-Erd\H{o}s Condition.
In this paper, we extend Luo {\it et al.}'s result to group connectivity. The
main theorem  is as follows.

\begin{thm}
\label{th1} Let $G$ be a 3-edge-connected simple graph and $\alpha(G)\le 2$.
$G$ is not one of the 18 special graphs shown in Fig. 1
if and only if  $G$ can be $Z_3$-contracted to one of the graphs $\{K_1,
K_4\}$.
\end{thm}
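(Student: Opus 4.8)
The plan is to prove Theorem~\ref{th1} by leveraging Theorem~\ref{luo} together with the standard ``reduction'' machinery for group connectivity, adding to it the extra cases that arise because $Z_3$-connectivity is strictly stronger than admitting a nowhere-zero $3$-flow. First I would record the basic closure facts I will use repeatedly: if $H$ is a $Z_3$-connected subgraph of $G$ and $G/H$ is $Z_3$-connected then so is $G$; $K_4$ and every graph that $Z_3$-contracts to $K_1$ or $K_4$ behave well under these operations; and small concrete graphs ($C_n$ for $n\ge?$, wheels, $K_4^-$ plus an edge, etc.) have known group-connectivity status that can be invoked as lemmas. I would also fix the easy direction: each of the $18$ graphs in Fig.~1 must be checked (or cited) to be \emph{not} $Z_3$-contractible to $K_1$ or $K_4$ — this is a finite verification, and for the three graphs $G^3,G^5,G^{18}$ already flagged in Theorem~\ref{luo} it is immediate since they do not even have a nowhere-zero $3$-flow.

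For the substantive direction, suppose $G$ is $3$-edge-connected, simple, with $\alpha(G)\le 2$, and $G$ is none of the $18$ graphs. The strategy is: first apply Theorem~\ref{luo}. If $G$ is not among $G^3,G^5,G^{18}$ and cannot be contracted to $K_4$, then $G$ has a nowhere-zero $3$-flow; but I need the stronger conclusion, so this only tells me the ``obstruction'' is not a $4$-flow obstruction. I would then analyze the structure forced by $\alpha(G)\le 2$: the complement $\overline G$ is triangle-free, so by Ramsey/Turán-type bounds $G$ is dense — in particular $\delta(G)\ge n-1-(\text{something})$, and for $n$ large $G$ contains large complete subgraphs and is highly edge-connected. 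The key structural step is to show that a sufficiently large such $G$ contains a $Z_3$-connected subgraph $H$ (e.g.\ a sufficiently large complete graph, or $K_{3,3}$, or a suitable wheel — all known to be $Z_3$-connected) whose contraction $G/H$ still has independence number $\le 2$ and is $3$-edge-connected, so induction applies and $G$ $Z_3$-contracts down to $K_1$ or $K_4$. This reduces the problem to a bounded-size case analysis: all $3$-edge-connected simple graphs with $\alpha\le 2$ on at most some explicit number $n_0$ of vertices, which is a finite (if tedious) check, and this is exactly where the $18$ exceptional graphs and the two ``target'' graphs $K_1,K_4$ are pinned down.

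In more detail, the inductive step splits along the value of a maximum clique $\omega(G)$. When $G$ has two nonadjacent vertices $u,v$, every other vertex is adjacent to at least one of them, so $N[u]\cup N[v]=V(G)$; combined with $3$-edge-connectivity this forces one of $u,v$ — say $u$ — to have large degree, and the subgraph induced on a large neighborhood, being itself a dense graph with small independence number, contains a $Z_3$-connected piece. One must verify that after contracting this piece the independence number does not jump (it cannot, since contraction only identifies vertices) and edge-connectivity stays $\ge 3$ (here one uses that $G$ is simple and the contracted set is chosen to avoid creating a small edge cut — a parity/counting argument). The alternative case $\alpha(G)=1$, i.e.\ $G=K_n$, is handled directly: $K_n$ is $Z_3$-connected for $n\ge 5$ and $Z_3$-contracts to $K_4$ for $n=4$, to $K_1$ trivially for $n\le 1$, while $K_2,K_3$ are not $3$-edge-connected so do not arise.

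The main obstacle, I expect, is the bounded-size case analysis: controlling exactly which small $3$-edge-connected graphs with $\alpha\le 2$ fail to be $Z_3$-contractible to $\{K_1,K_4\}$, and proving the list is precisely the $18$ graphs of Fig.~1 and no more. Unlike the nowhere-zero $3$-flow setting, there is no single clean obstruction, so one cannot simply cite Theorem~\ref{luo}; instead each borderline graph needs either an explicit $Z_3$-contraction exhibited or a proof (often via a counting argument on $|E|$ versus $|V|$, or an explicit ``bad'' boundary function $b$) that none exists. Making the reduction threshold $n_0$ small enough that this case check is humanly feasible — while still large enough that the inductive step always fires — is the delicate balancing act at the heart of the argument.
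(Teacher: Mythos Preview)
Your plan diverges from the paper's proof in its central engine, and the divergence is not just stylistic: the route you sketch has a real gap.

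First, Theorem~\ref{luo} is a red herring. A nowhere-zero $3$-flow gives no leverage toward $Z_3$-connectivity, as you yourself note, so invoking it at the start buys nothing. The paper never uses Theorem~\ref{luo}; its external input is Theorem~\ref{ore} (the Ore-condition characterization of $Z_3$-connected graphs), which is a genuinely $Z_3$-connectivity statement. The whole architecture of the paper is: stratify $G$ by the minimum degree-sum $d(u)+d(v)$ over non-adjacent pairs. If this is $\ge n$, Theorem~\ref{ore} applies directly and the only exceptions from Fig.~3 that are $3$-edge-connected with $\alpha\le 2$ are $G^1,\dots,G^5$. If the minimum is $n-1$ or $n-2$ (the only other possibilities when $\alpha(G)=2$, since $N[x]\cup N[y]=V(G)$), then $|N(x)\cap N(y)|\le 1$, and the crucial structural fact is that $G[N[x]\setminus\{u\}]$ and $G[N[y]\setminus\{u\}]$ are \emph{complete} graphs. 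This clique decomposition, not a Ramsey/Tur\'an density bound, is what drives the argument and keeps the case analysis finite without any induction.

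Second, your inductive step is not sound as stated. After contracting a $Z_3$-connected subgraph $H$, the graph $G/H$ is typically \emph{not simple}, so you cannot feed it back into the induction hypothesis (which is about simple graphs). More seriously, $G/H$ may well be isomorphic to one of the $18$ exceptional graphs, and then you are stuck: you would need a separate argument that $G$ itself is nonetheless $Z_3$-contractible to $\{K_1,K_4\}$, which is exactly the hard content. The paper avoids this trap by never inducting on $n$; instead it repeatedly uses the lifting operation $G_{[uv,uw]}$ (Lemma~\ref{1}) together with $2$-cycle contractions to reduce $G$ directly to $K_1$, and the exceptional graphs emerge precisely as the configurations where no such lifting succeeds. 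Your proposal mentions neither Lemma~\ref{1} nor the clique decomposition of $V(G)$, and without them there is no mechanism to pin the exceptional list down to exactly $18$ graphs or to bound your $n_0$.
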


From Theorem~\ref{th1}, we obtain the following corollary immediately.

\begin{cor}
Let $G$ be a 3-edge-connected graph and $\alpha(G)\le 2$.
Then one of the following holds:

(i) $G$ can be $Z_3$-contracted to one of the graphs $\{K_1,
K_4\}$, or

(ii) $G$ is one of the 18 special graphs shown in Fig. 1, or

(ii) $G$ is one of the graphs $\{{G^3}', {G^4}', {G^{10}}', {G^{11}}'\}$ shown in Fig. 2,
where $u, v$ are adjacent by $m$ edges, $m\ge 2$ for $i=3, 4, 10$ and $m\ge 3$ for $i=11$.
\end{cor}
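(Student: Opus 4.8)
The plan is to derive the corollary from Theorem~\ref{th1} by a short reduction argument that handles the multigraph case, since Theorem~\ref{th1} is stated only for simple graphs while the corollary allows multiple edges. First I would note that a $3$-edge-connected graph $G$ with $\alpha(G)\le 2$ has an underlying simple graph $\widehat G$ that still satisfies $\alpha(\widehat G)\le 2$, but $\widehat G$ need not be $3$-edge-connected: contracting or merging parallel classes can drop the edge-connectivity. The natural move is to first contract all $Z_3$-connected subgraphs of $G$; call the result $G'$. Any pair of parallel edges forms a $2$-cycle, and a $2$-cycle is \emph{not} $Z_3$-connected, so parallel edges can survive in $G'$; what one does know is that $G'$ has no nontrivial $Z_3$-connected subgraph, and in particular (using that $3$ parallel edges, being a $Z_3$-connected subgraph, would be contracted) every parallel class in $G'$ has multiplicity exactly $2$ — except possibly one special configuration. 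So the structural heart of the argument is: classify the reduced graph $G'$.

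The key steps, in order, are as follows. Step 1: reduce to $G'$ and record that $G'$ is $3$-edge-connected (contraction of $Z_3$-connected pieces preserves edge-connectivity in the relevant sense, by the standard closure properties of the family of $Z_3$-connected graphs used throughout the paper) and still has independence number at most $2$. Step 2: if $G'$ is simple, apply Theorem~\ref{th1} directly: either $G'\in\{K_1,K_4\}$, giving conclusion (i), or $G'$ is one of the $18$ graphs of Fig.~1, giving conclusion (ii). Step 3: if $G'$ is not simple, pick a parallel class between vertices $u,v$ of multiplicity $m\ge 2$. Because $\alpha(G')\le 2$, every other vertex is adjacent to $u$ or to $v$; because $G'$ has no $Z_3$-connected subgraph, $m=2$ unless the whole graph is essentially the $m$-bond plus a controlled attachment. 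Analyze the simple graph $H$ obtained from $G'$ by replacing the parallel class with a single edge $uv$ (or by $G'-\{$extra copies$\}$): $H$ is again a candidate for Theorem~\ref{th1} after re-establishing $3$-edge-connectivity, and one shows $H$ must be (a spanning subgraph related to) one of $G^3, G^4, G^{10}, G^{11}$, forcing $G'$ itself to be the corresponding ${G^i}'$ with the stated multiplicities $m\ge 2$ for $i=3,4,10$ and $m\ge 3$ for $i=11$ (the extra copy being needed for edge-connectivity in the $G^{11}$ case). Step 4: verify that these ${G^i}'$ are genuinely not $Z_3$-contractible to $\{K_1,K_4\}$, e.g.\ by exhibiting an obstructing boundary function $b$, which is consistent with Theorem~\ref{luo} since $G^3, G^5, G^{18}$ and ${G^3}'$ already fail to have a nowhere-zero $3$-flow.

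The main obstacle I expect is Step 3: the case analysis of the reduced multigraph $G'$. One has to be careful that contracting parallel edges or peeling off the $m$-bond can create or destroy small edge cuts, so re-verifying $3$-edge-connectivity of the auxiliary simple graph before invoking Theorem~\ref{th1} is delicate, and there may be a handful of sporadic small graphs on few vertices (where $\alpha\le 2$ is easy to satisfy) that must be checked by hand. A secondary subtlety is pinning down exactly why the multiplicity threshold jumps from $2$ to $3$ for $i=11$ but not for $i=3,4,10$; this hinges on the local edge-connectivity around the bond in $G^{11}$ versus the other three, and on which of these structures already appear among the $18$ graphs of Fig.~1 when $m$ is small. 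I would organize the write-up so that the simple-graph input from Theorem~\ref{th1} does all the heavy lifting and the multigraph reduction is reduced to a finite, explicitly enumerated check.
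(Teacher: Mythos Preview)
Your proposal contains a basic error that undermines Step~3 entirely: you assert that ``a $2$-cycle is \emph{not} $Z_3$-connected, so parallel edges can survive in $G'$.'' This is false. By Lemma~\ref{o}(3), $C_n$ is $A$-connected if and only if $|A|\ge n+1$; for $n=2$ and $A=Z_3$ we have $|Z_3|=3\ge 3$, so $C_2$ \emph{is} $Z_3$-connected. The paper uses exactly this fact over and over (``contracting $2$-cycles'' appears throughout the proofs of Lemmas~\ref{4}, \ref{n-2}, \ref{n-1}). Consequently, after contracting all $Z_3$-connected subgraphs the reduced graph $G'$ is automatically simple, and your whole analysis of surviving parallel classes of multiplicity $2$, together with the discussion of why three parallel edges would be contracted but two would not, collapses.

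The paper regards the corollary as immediate from Theorem~\ref{th1}, and the intended reduction is different from yours: pass to the underlying simple graph $\widehat G$, which still has $\alpha(\widehat G)\le 2$. If $\widehat G$ is $3$-edge-connected, apply Theorem~\ref{th1}; when $\widehat G$ is $Z_3$-contractible to $K_1$ or $K_4$ so is $G$ (any leftover parallel edges form $2$-cycles and contract away), and when $\widehat G$ is one of the eighteen graphs one checks, edge by edge, for which $G^i$ and which edge $uv$ the addition of parallel copies fails to create a $Z_3$-connected subgraph that collapses the exception---this singles out $i\in\{3,4,10,11\}$ and yields the families ${G^i}'$. If $\widehat G$ is not $3$-edge-connected, the missing edges across a small cut must come from a parallel class in $G$, and the same finite check applies. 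The jump from $m\ge 2$ to $m\ge 3$ for $i=11$ is explained by the fact that the relevant ${G^{11}}'$ with $m=2$ already coincides with one of the listed exceptions (compare $G^{10}$ and $G^{11}$), not by an edge-connectivity obstruction as you guessed. So the overall shape of your plan---reduce the multigraph case to Theorem~\ref{th1} and isolate a short list of sporadic families---is right, but the mechanism you propose for the reduction is based on a false premise and would not work as written.
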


\includegraphics[bb=-200 0 725 474, height=6 cm]{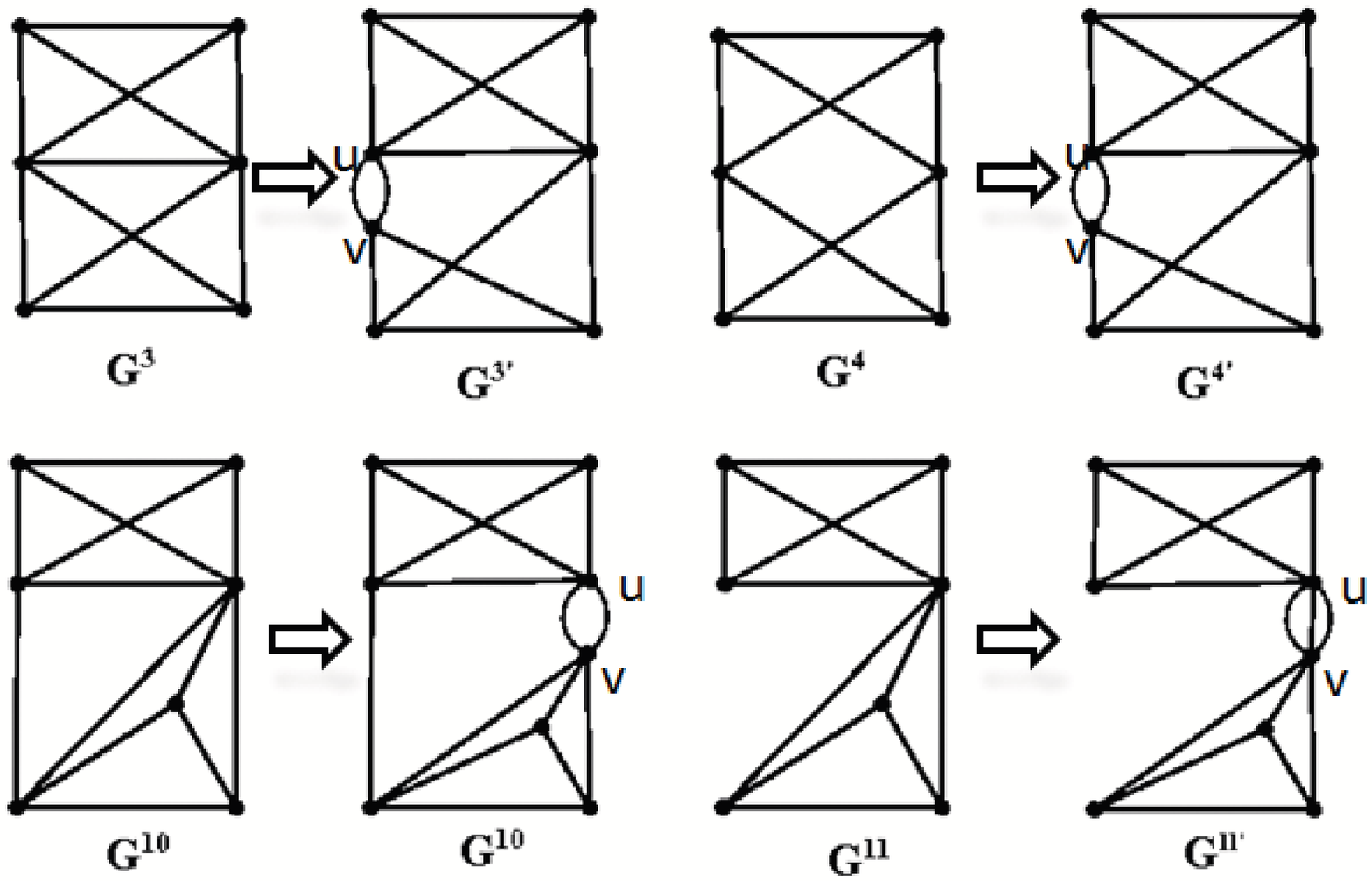}
\vskip 0.3cm
\begin{picture}(15,1)
\put(5,1){Fig. 2: Construction of graph of ${G^3}'$, ${G^4}'$, ${G^{10}}'$, ${G^{11}}'$}
\end{picture}

We end this section with some terminology and notation not define in \cite{Bondy}. For $V_1, V_2\subseteq V(G)$ and $V_1\cap V_2=\emptyset$,
denote by $e(V_1, V_2)$ the number of edges with one endpoint in
$V_1$ and the other endpoint in $V_2$. For $S\subseteq V(G)$, $G[S]$ denotes an induced subgraph of $G$ with vertex-set $S$.
Let $N_G(v)$ denote the set of
all vertices adjacent to vertex $v$; set $N_G[v]=N_G(v)\cup \{v\}$.
We usually use $N(v)$ and $N[v]$ for $N_G(v)$ and $N_G[v]$ if there
is no confusion.  A $k$-vertex denotes a vertex of
degree $k$. Let $K_n$ denote a complete graph with $n$
vertices, where $n\geq 3$. Moreover, $K_3$ denotes a 3-cycle. A {\it
$k$-cycle} is a cycle of length $k$; a 3-cycle is also called a {\it
triangle.} The wheel $W_k$ is the graph obtained from a $k$-cycle by
adding a new vertex and joining it to every vertex of the $k$-cycle.
When $k$ is odd (even), we say $W_k$ is an {\it odd (even)} wheel. For
convenience, we define $W_1$ as a triangle.

\section{Preliminary}

Here we state some lemmas which are essential to the proof of our result.

\begin{lemma}
\label{o} Let $A$ be an abelian group with $|A|\ge 3$. The following
results are known:

(1) (Proposition 3.2 of \cite{H.J.Lai}) $K_1$ is $A$-connected;

(2)(Corollary 3.5 of \cite{H.J.Lai}) $K_n$ and $K_n^-$ are $A$-connected if $n\ge 5$;

(3) (\cite{F.Jaeger} and Lemma 3.3 of \cite{H.J.Lai})$C_n$ is $A$-connected if and only if $|A|\ge n+1$;

(4) (Theorem 4.6 of \cite{J.J.Chen}) $K_{m,n}$ is $A$-connected if $m\ge n\ge 4$; neither $K_{2,t}$
$(t\ge 2)$ nor $K_{3,s}$ $(s\ge 3)$ is $Z_3$-connected;

(5) (Lemma 2.8 of \cite{J.J.Chen} and Proposition 2.4 of \cite{M.DeVos} and Lemma 2.6 of \cite{G.Fan}) Each even wheel is $Z_3$-connected and each odd wheel is not;

(6) (Proposition 3.2 of \cite{H.J.Lai}) Let $H\subseteq G$ and $H$ be $A$-connected. $G$ is
$A$-connected if and only if $G/H$ is $A$-connected;

(7) (Lemma 2.3 of \cite{hou}) Let $v$ be not a vertex of
$G$. If $G$ is $A$-connected and $e(v, G)\ge 2$, then $G\cup \{v\}$
is $A$-connected.
\end{lemma}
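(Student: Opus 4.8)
The seven parts form a toolkit of standard $A$-connectivity facts, and the natural plan is to prove the three structural reductions (1), (6), (7) first, since they are the engine that drives everything else, and then to obtain the building-block statements (2)--(5) by combining a handful of directly verified base cases with these reductions. A recurring theme is that every \emph{positive} assertion (a graph is $A$-connected) can be reached by the reduction machinery, whereas each \emph{negative} assertion (not $Z_3$-connected) must be certified by exhibiting an explicit boundary $b$ that no nowhere-zero realization can meet.

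Part (1) is immediate: $K_1$ has no edge, so the empty map realizes the only admissible boundary $b\equiv 0$. For the vertex-addition lemma (7), let $v$ be joined to $G$ by edges $vu_1,\dots,vu_d$ with $d\ge 2$, orient them all out of $v$, and seek values $a_1,\dots,a_d\in A^*$. Given a target boundary $b'$ of zero sum, the only requirement is $\sum_i a_i=b'(v)$: defining $b_G(u_i)=b'(u_i)+a_i$ and $b_G(w)=b'(w)$ otherwise produces a zero-sum boundary on $G$, which is realizable because $G$ is $A$-connected. Since $|A|\ge 3$ we have $|A^*|\ge 2$, so two (hence any $d\ge 2$) nonzero elements can always be chosen with any prescribed sum; this is the crux of (7). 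The contraction lemma (6) is proved in both directions by transporting boundaries: contracting $H$ to a single vertex $v_H$, a zero-sum boundary on one side pushes forward or lifts to a zero-sum boundary on the other, and $A$-connectivity of $H$ absorbs the residual demand on the vertices of $H$ while $A$-connectivity of $G/H$ handles the rest.

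With the engine in place, the building blocks follow. For (3), orient $C_n$ cyclically and write $f$ on consecutive edges as $x_1,\dots,x_n$; the equations $\partial f=b$ telescope, forcing $x_i=x_1+c_i$ for constants $c_i$ determined by $b$, so a realization exists iff some $x_1\in A$ avoids the at most $n$ forbidden values $-c_i$, which is possible exactly when $|A|\ge n+1$, while the converse exhibits a $b$ making all $n$ values forbidden once $|A|\le n$. For (2), the single base case is $K_5^-$, which equals the even wheel $W_4$; then $K_5=K_5^-+e$ stays $A$-connected (assign $e$ any nonzero value and reduce to $K_5^-$), and every larger $K_n$ or $K_n^-$ is built by adjoining a vertex of degree $\ge 4\ge 2$ via (7). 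For (5), even wheels are obtained inductively by attaching the next pair of rim vertices and their spokes and applying (6)--(7) to a verified small even wheel, while each odd wheel $W_{2k+1}$ fails because a nowhere-zero $3$-flow would require a mod-$3$ orientation in which every degree-$3$ rim vertex is a pure source or sink; consecutive rim vertices would then have to alternate source/sink around an odd cycle, which is impossible. Finally (4): for $m\ge n\ge 4$ one descends to $K_{4,4}$ by deleting a vertex of degree $\ge 4$ and re-adding it via (7); for the negative part, in $K_{2,t}$ choosing $b(w_i)\ne 0$ at every degree-$2$ vertex forces each incident edge value uniquely, which over-determines the boundary at the two high-degree vertices and rules out generic $b$, and $K_{3,s}$ is handled by the analogous but slightly longer count over its degree-$3$ vertices.

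The genuinely non-mechanical work is concentrated in two places. First are the base-case verifications --- $K_5^-=W_4$, the smallest even wheel, and $K_{4,4}$ --- which the reductions cannot shortcut and which are done either by a symmetry reduction on the automorphism group or by a finite case analysis over $Z_3$. Second, and harder, are the negative statements in (4) (and the odd-wheel half of (5)): unlike the positive claims these lie outside the contraction and vertex-addition framework and require constructing an explicit unsatisfiable boundary together with an invariant --- here a forcing or parity argument special to $Z_3$ --- proving that no orientation and nowhere-zero assignment can realize it.
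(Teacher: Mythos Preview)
The paper does not prove this lemma at all: it is presented as a toolbox of known results, each item accompanied by a citation to the source where it is established, and no argument is given in the paper itself. Your proposal therefore does something the paper does not attempt, namely to sketch self-contained proofs of all seven items. Most of your sketches are along standard lines and would work, but there is one concrete error and one type mismatch worth flagging.

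The concrete error is the claim that $K_5^{-}$ \emph{equals} the even wheel $W_4$. It does not: $W_4$ has $8$ edges and degree sequence $(4,3,3,3,3)$, while $K_5^{-}$ has $9$ edges and degree sequence $(4,4,4,3,3)$. What is true is that $K_5^{-}$ \emph{contains} $W_4$ as a spanning subgraph (take the center to be any vertex of degree $4$ and the rim to be the $4$-cycle on the remaining vertices that avoids the missing edge), so by (6) the $A$-connectivity of $W_4$ still implies that of $K_5^{-}$; your conclusion survives, but the identification must be replaced by a containment.

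The type mismatch is that item (2) asserts $A$-connectivity for \emph{every} abelian group $A$ with $|A|\ge 3$, whereas item (5) as stated in the paper speaks only of $Z_3$. Your reduction of (2) to the base case $W_4$ via (5) therefore only yields $Z_3$-connectivity of $K_n$ and $K_n^{-}$, not $A$-connectivity. To repair this you either need the stronger (and true) fact that even wheels are $A$-connected for all such $A$, or you need an independent base-case verification of $K_5^{-}$ for general $A$; either way this is an extra step your sketch currently elides.
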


Let $G$ be a graph and $u,v, w$ be three vertices of $G$ with $uv,
uw\in E(G)$, and $d_G(u)\ge 4$. Let $G_{[uv,uw]}$ be the graph
$G\cup \{vw\}-\{uv, uw\}$.

\begin{lemma}
\label{1}{(Theorem 3.1 of \cite{J.J.Chen})} Let $A$ be an abelian group with $|A|\geq 3$. If
$G_{[uv,uw]}$ is $A$-connected, then so is $G$.
\end{lemma}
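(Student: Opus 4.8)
The plan is to derive $A$-connectivity of $G$ directly from that of $G_{[uv,uw]}$ by transporting prescribed boundaries back and forth. I would first invoke the standard observation that $A$-connectivity does not depend on the chosen orientation: reversing the direction of a single edge and negating the value assigned to it changes no vertex boundary and preserves the nowhere-zero condition. Hence we may fix once and for all an orientation $D$ of $G$ in which $uv$ points from $v$ to $u$ and $uw$ points from $u$ to $w$, and it then suffices to produce, for every $b:V(G)\to A$ with $\sum_{x\in V(G)}b(x)=0$, a map $f:E(G)\to A^*$ with $\partial f=b$ under $D$.

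Given such a $b$, note that $G_{[uv,uw]}$ has the same vertex set as $G$ — the operation only suppresses the two edges at $u$ and inserts the edge $vw$ — so $b$ is also a legitimate prescribed boundary for $G_{[uv,uw]}$. I would orient $G_{[uv,uw]}$ by keeping $D$ on every inherited edge and directing the new edge $vw$ from $v$ to $w$; since $G_{[uv,uw]}$ is $A$-connected there is $f':E(G_{[uv,uw]})\to A^*$ with $\partial f'=b$. Put $\alpha=f'(vw)\in A^*$, and define $f:E(G)\to A$ by $f(e)=f'(e)$ for every $e$ other than $uv,uw$ and $f(uv)=f(uw)=\alpha$. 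Since $\alpha\ne 0$, $f$ is nowhere zero.

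The verification that $\partial f=b$ is then the routine part. At every $x\notin\{u,v,w\}$ nothing incident to $x$ changed, so $\partial f(x)=\partial f'(x)=b(x)$. At $u$, the replacement edges $uv$ (entering $u$) and $uw$ (leaving $u$) contribute $-\alpha$ and $+\alpha$, a net zero, and $u$ meets no other removed or created edge, so $\partial f(u)=\partial f'(u)=b(u)$. At $v$ the edge $vw$, which left $v$ carrying value $\alpha$ in $G_{[uv,uw]}$, is replaced by $uv$, which again leaves $v$ carrying value $\alpha$, giving $\partial f(v)=b(v)$; the computation at $w$ is symmetric, with an entering edge of value $\alpha$ in each graph. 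Hence $f$ witnesses the $A$-connectivity of $G$.

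I do not anticipate a real obstacle: the whole content is the remark that two equal-valued edges forming a $2$-path through an already-present vertex behave, as far as boundaries are concerned, exactly like the single lifted edge, so the boundary demand at $u$ is simply absorbed by the other edges incident to $u$. The only thing that needs a little care is the orientation bookkeeping at $u$ — one must direct $uv$ and $uw$ consistently along $v\to u\to w$ so that their contributions to $\partial f(u)$ cancel — together with the preliminary normalization of the orientation that makes this legal. The hypothesis $d_G(u)\ge 4$ is not used in this direction; it only ensures that $u$ still has degree at least $2$ in $G_{[uv,uw]}$, which is relevant when applying the lemma rather than in proving it.
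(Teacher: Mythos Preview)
Your argument is correct: the boundary-lifting via $f(uv)=f(uw)=f'(vw)$ with the consistent $v\to u\to w$ orientation is exactly the standard proof, and your orientation bookkeeping at $u$, $v$, $w$ is clean. The paper itself does not supply a proof of this lemma but merely cites it as Theorem~3.1 of \cite{J.J.Chen}, so there is nothing further to compare; your direct verification is essentially what one finds in that reference.
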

Let $H_1$ and $H_2$ be two disjoint graphs. The 2-sum of $H_1$ and $H_2$,
denoted by $H_1\oplus H_2$, is the grpah obtained from $H_1\cup H_2$ by identifying exactly one edge.
A graph $G$ is triangularly connected if for any two distinct edges $e$, $e'$, there is a sequence of distinct
cycles of length at most 3, $C_1, C_2, \ldots, C_m$ in $G$ such that $e\in E(C_1)$, $e'\in E(C_m)$ and
$|E(C_i)\cap E(C_{i+1})|=1$ for $1\le i\le m-1$.

\begin{lemma}\label{fan}(Fan et al.\cite{G.Fan})
Let $G$ be a triangularly connected graph. Then $G$ is $A$-connected for all abelian group
$A$ with $|A|\ge 3$ if and only if $G\neq H_1\oplus H_2\oplus \ldots \oplus H_k$,
where $H_i$ is an odd wheel (including a triangle) for $1\le i\le k$.
\end{lemma}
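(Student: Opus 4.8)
\section*{Proof proposal for Lemma~\ref{fan}}

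The plan is to prove the two implications of the biconditional separately, writing $\mathcal{W}$ for the family of graphs of the form $H_1\oplus H_2\oplus\cdots\oplus H_k$ with each $H_i$ an odd wheel. For the necessity direction I would show that every $G\in\mathcal{W}$ fails to be $Z_3$-connected, which already destroys ``$A$-connected for all $|A|\ge 3$''. A single odd wheel is not $Z_3$-connected by Lemma~\ref{o}(5), and a triangle $W_1$ is not $Z_3$-connected by Lemma~\ref{o}(3), since $C_3$ is $A$-connected only when $|A|\ge 4$. I would then induct on $k$: assuming $G'=H_1\oplus\cdots\oplus H_{k-1}$ carries a boundary $b'\colon V(G')\to Z_3$ with $\sum b'=0$ that no nowhere-zero $Z_3$-flow realizes, I extend $b'$ across the edge $e=xy$ glued to the last odd wheel $H_k$ to obtain an infeasible boundary on $G$. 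The key ingredient is a gluing lemma stating that an odd wheel, even with the flow value on a single prescribed edge fixed arbitrarily, cannot realize all admissible boundaries at the two endpoints of that edge; this lets the obstruction of $G'$ at $e$ survive rather than be absorbed by $H_k$.

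For the sufficiency direction I would argue by induction on $|E(G)|$ that every triangularly connected $G\notin\mathcal{W}$ is $A$-connected for every abelian group $A$ with $|A|\ge 3$. A base fact I would establish first is that every even wheel is $A$-connected for all such $A$: for $|A|=3$ this is exactly Lemma~\ref{o}(5), and for $|A|\ge 4$ I build the wheel from a single triangle (the hub together with two consecutive rim vertices, which is $A$-connected by Lemma~\ref{o}(3) because $|A|\ge 4$) and then attach each remaining rim vertex through its spoke and an incident rim edge, i.e.\ by at least two edges, invoking Lemma~\ref{o}(7) at each step. The same construction shows that $K_4$ and indeed every odd wheel is $A$-connected once $|A|\ge 4$, so in the excluded family the failure occurs only at $A=Z_3$.

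For the inductive step, since $G$ is triangularly connected and not a single triangle, I would fix a vertex $v$ and take the maximal wheel $W$ (the maximal fan of triangles with hub $v$) contained in $G$. If $W$ is an even wheel it is $A$-connected by the base case, so by Lemma~\ref{o}(6) $G$ is $A$-connected if and only if $G/W$ is, where $G/W$ has fewer edges; I would then check that $G/W$ is again triangularly connected and apply the induction hypothesis. If $W$ is an odd wheel but $W\neq G$, triangular connectivity supplies a triangle meeting $W$ in one edge through a vertex outside $W$, and I would use this edge together with the rerouting Lemma~\ref{1} (the $G_{[uv,uw]}$ operation) or Lemma~\ref{o}(7) to enlarge the $A$-connected core to an even wheel, or directly to a larger $A$-connected subgraph, and then contract as before. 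Since every contraction is of an $A$-connected subgraph, by Lemma~\ref{o}(6) it suffices to drive $G$ down to $K_1$.

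The hard part will be controlling the induction so that contracting an $A$-connected subgraph never strands us in $\mathcal{W}$. Two points must be verified: first, that the quotient $G/W$ stays triangularly connected (which can fail if $W$ swallows a separating structure); and second, the genuine obstacle, that if $G/W$ happened to lie in $\mathcal{W}$, then $G$ itself would already lie in $\mathcal{W}$, contradicting the hypothesis. Proving this demands a structural analysis of how a $2$-sum decomposition into odd wheels pulls back along the contraction of an even wheel, together with careful treatment of the boundary case in which the maximal wheel $W$ is odd and its excess edges are precisely the gluing edges of a $2$-sum. This case analysis, and the gluing lemma underpinning necessity, are where essentially all of the work lies; the wheel-building and contraction reductions themselves are routine given Lemmas~\ref{o} and \ref{1}.
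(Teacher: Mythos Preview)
The paper does not give its own proof of this lemma: it is quoted verbatim as a result of Fan, Lai, Xu, Zhang and Zhou \cite{G.Fan} and used as a black box (to certify that $G^{10}$, $G^{16}$, $G^{18}$ are not $Z_3$-connected). So there is no in-paper argument to compare your proposal against; you are attempting to reprove a cited theorem whose original proof is substantially longer and more delicate than the sketch you give.

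As a standalone outline your plan has the right shape but leaves the two load-bearing steps unresolved, as you yourself concede. For necessity, the ``gluing lemma'' you invoke is not stated precisely enough to be checkable: you need to show that for an odd wheel $W$ with a distinguished edge $e=xy$, there exist boundary values at $x,y$ (summing appropriately with the rest of $W$) that cannot be realized by any nowhere-zero $Z_3$-flow regardless of the value forced on $e$; formulating and proving this correctly is already nontrivial. For sufficiency, ``the maximal wheel with hub $v$'' is not well defined (a vertex can be the hub of several incomparable wheels), and more seriously the heart of the argument---that contracting an even wheel in a triangularly connected $G\notin\mathcal W$ cannot land you in $\mathcal W$---is exactly the structural content of the Fan et al.\ theorem and is not something Lemmas~\ref{o} and~\ref{1} alone will deliver. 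Your last paragraph accurately identifies these as the places ``where essentially all of the work lies,'' so what you have is a plausible plan rather than a proof.
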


\includegraphics[bb=-50 0 810 330, height=6 cm]{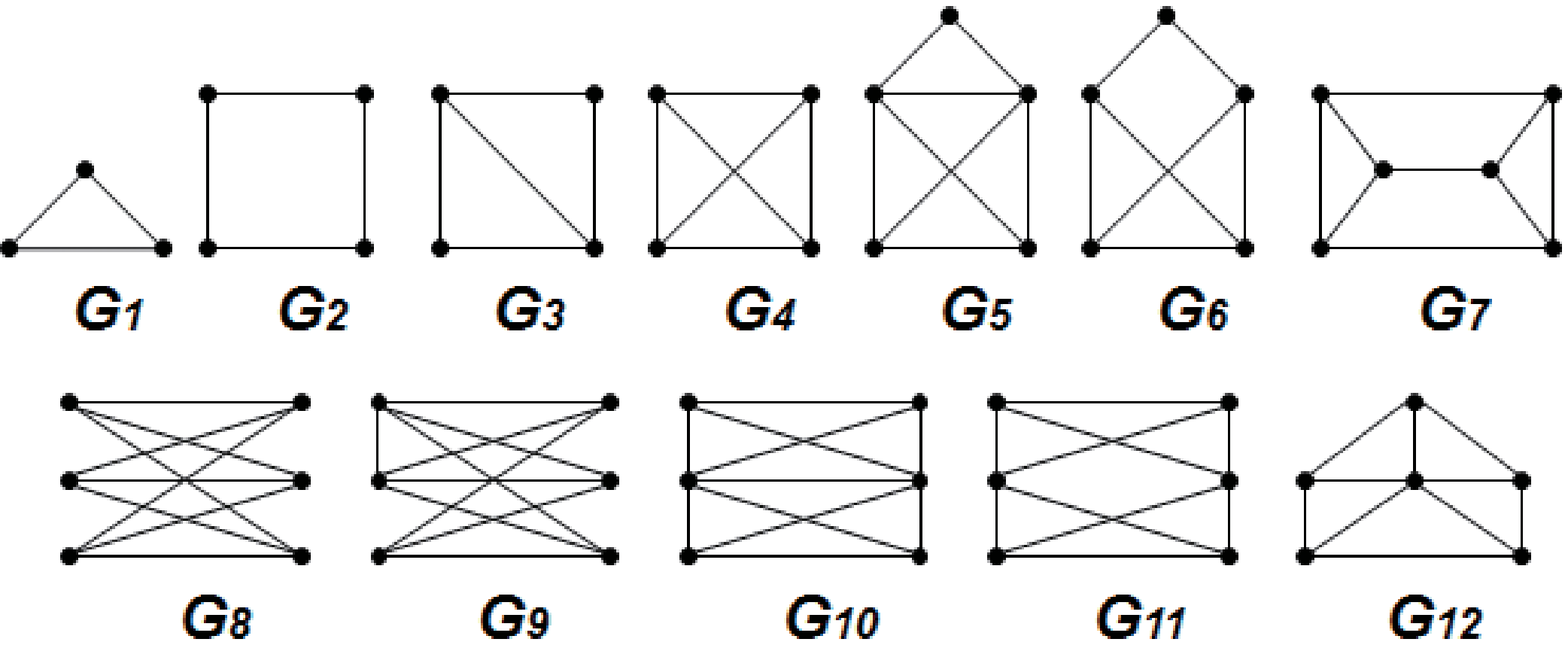}
\vskip 0.3cm
\begin{picture}(15,1)
\put(10,1){Fig. 3:  12 specified graphs}
\end{picture}

There are lots of results about Degree condition and $Z_3$-connectivity. We say
$G$ satisfies Ore-condition, if for each $uv\notin E(G)$, $d(u)+d(v)\ge |V(G)|$. We will
discuss our result via the following Theorem.

\begin{thm}
\label{ore}(Luo et al. \cite{Luo}) A simple graph $G$ satisfying the
Ore-condition
with at least 3 vertices is not $Z_3$-connected if and only if $G$ is one of the
12 graphs in  Fig. 3.
\end{thm}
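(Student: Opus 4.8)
The plan is to deduce Theorem~\ref{ore} from the triangular-connectivity characterization in Lemma~\ref{fan}, after first showing that any graph satisfying the Ore-condition is triangularly connected (apart from a few small exceptions that we handle by hand). So the first step is structural: let $G$ be a simple graph on $n=|V(G)|$ vertices satisfying the Ore-condition, meaning $d(u)+d(v)\ge n$ for every non-edge $uv$. I would first observe that such a $G$ is dense, and in particular that any two non-adjacent vertices $u,v$ have many common neighbours: since $|N(u)|+|N(v)|=d(u)+d(v)\ge n$ and $u,v\notin N(u)\cup N(v)$, the sets $N(u)$ and $N(v)$ must overlap, giving $|N(u)\cap N(v)|\ge d(u)+d(v)-(n-2)\ge 2$. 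This common-neighbour abundance is the engine that produces short cycles through almost every pair of edges.

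The second step is to prove that $G$ is triangularly connected whenever $G$ is not one of a small list of low-order graphs. Concretely, I would argue that every edge of $G$ lies in a triangle (an adjacent pair $x,y$ has a common neighbour because their joint degree is large, or because a third vertex non-adjacent to one of them is forced to be adjacent to both), and then that for any two edges $e,e'$ the triangles can be chained together by overlapping in a single edge. The common-neighbour estimate above lets me build the required sequence $C_1,\dots,C_m$ of triangles: given two triangles one would ``walk'' between them using shared vertices, converting each step into a triangle overlap by invoking the fact that consecutive vertices have a common neighbour. I expect this to succeed cleanly once $n$ is moderately large, with the verification that no edge is isolated in the triangle-hypergraph being the part that needs care.

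The third step is to apply Lemma~\ref{fan}: once $G$ is triangularly connected, it is $A$-connected for every abelian group $A$ with $|A|\ge 3$ (in particular $Z_3$-connected) \emph{unless} $G$ decomposes as a $2$-sum of odd wheels, $G=H_1\oplus H_2\oplus\cdots\oplus H_k$ with each $H_i$ an odd wheel or a triangle. The remaining work is therefore to determine exactly which Ore-condition graphs are such $2$-sums of odd wheels, together with the finitely many small-$n$ exceptional graphs that escaped the triangular-connectivity argument. The $2$-sum structure is very rigid: gluing odd wheels along edges forces specific degree sequences, and the Ore-condition sharply constrains how many summands and of what size can appear, so I would enumerate the feasible combinations directly. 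Cross-checking this enumeration against the explicit list should reproduce precisely the $12$ graphs of Fig.~3.

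The main obstacle will be the bookkeeping in the converse direction, that is, confirming that each of the $12$ listed graphs actually fails to be $Z_3$-connected and that the list is complete with no omissions. Establishing failure is the easier half, since each candidate is either an odd wheel, a triangle, or a $2$-sum of such (all non-$Z_3$-connected by Lemma~\ref{fan}), but proving completeness requires ruling out every other Ore-condition graph, and the delicate point is the boundary cases of small $n$ where the triangular-connectivity argument of Step~2 does not apply and each graph must be examined individually. I would organize that finite check by degree sequence to keep it manageable.
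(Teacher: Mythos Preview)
There is nothing in the paper to compare your proposal against: Theorem~\ref{ore} is quoted from Luo, Xu, Yin and Yu \cite{Luo} and is used as a black box in the present paper, with no proof supplied here. So the ``paper's own proof'' does not exist in this document.

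On the merits of your sketch itself, the triangular-connectivity route has a concrete obstruction. Your Step~2 asserts that every edge of an Ore-condition graph lies in a triangle, but this is false: $K_{3,3}$ satisfies the Ore-condition ($n=6$, every non-edge has degree sum $3+3=6$) yet is bipartite and contains no triangle at all, so it is certainly not triangularly connected. Thus Lemma~\ref{fan} cannot be applied to it, and $K_{3,3}$ is one of the 12 graphs in Fig.~3. You allow for ``a small list of low-order'' exceptions, but your heuristic argument for triangular connectivity (``an adjacent pair $x,y$ has a common neighbour because their joint degree is large'') applies the Ore hypothesis to an \emph{adjacent} pair, which is not what the hypothesis gives you; so the claimed mechanism for producing triangles through every edge is not actually available, and the exceptional set may be harder to delimit than you suggest. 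A second, smaller issue: Lemma~\ref{fan} as stated says that a triangularly connected $2$-sum of odd wheels fails to be $A$-connected for \emph{some} $A$ with $|A|\ge 3$, not specifically for $A=Z_3$; so in Step~3 you cannot cite Lemma~\ref{fan} alone to conclude that each of the 12 graphs is non-$Z_3$-connected---you would need the separate (true, but additional) fact that odd wheels and their $2$-sums are not $Z_3$-connected.
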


\begin{lemma}
\label{not Z}
Let $G$ be a graph. If for some mapping $b: V(G) \rightarrow Z_3$ with $\sum_{v\in V(G)} b(v)=0$, there exists no orientation
such that $|E^+(v)|-|E^-(v)|=b(v)$ (mod 3) for each $v\in V(G)$, then $G$ is not $Z_3$-connected.
\end{lemma}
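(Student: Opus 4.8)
The plan is to prove the contrapositive. Assume $G$ is $Z_3$-connected; I will produce, for every mapping $b\colon V(G)\to Z_3$ with $\sum_{v\in V(G)}b(v)=0$, an orientation $D$ of $G$ satisfying $|E^+_D(v)|-|E^-_D(v)|\equiv b(v)\pmod 3$ for every $v\in V(G)$. This is precisely the negation of the hypothesis of the lemma, so establishing it yields the statement.

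The engine of the argument is the identity $Z_3^*=\{1,2\}=\{1,-1\}$, which lets one pass between nowhere-zero $Z_3$-flows and orientations. Applying the definition of $Z_3$-connectivity to $b$, there exist an orientation $D_0$ of $G$ and a map $f\colon E(G)\to Z_3^*$ with $\partial f(v)=b(v)$ for every $v$. Define a new orientation $D$ of $G$ by keeping every edge with $f(e)=1$ as oriented in $D_0$ and reversing every edge with $f(e)=2$.

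It remains to check that $D$ works, which is just sign bookkeeping modulo $3$. An edge $e=uv$ with $f(e)=1$ directed $u\to v$ in $D_0$ contributes $+1$ to $\partial f(u)$ and $-1$ to $\partial f(v)$, and in $D$ it still runs $u\to v$, contributing $+1$ and $-1$ to $|E^+(\cdot)|-|E^-(\cdot)|$ at $u$ and $v$ respectively; an edge $e=uv$ with $f(e)=2$ directed $u\to v$ in $D_0$ contributes $+2\equiv -1$ and $-2\equiv 1\pmod 3$, and in $D$ it runs $v\to u$, contributing $-1$ and $+1$. In both cases the contribution of $e$ to $|E^+_D(\cdot)|-|E^-_D(\cdot)|$ agrees modulo $3$ with its contribution to $\partial f(\cdot)$ at each endpoint. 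Summing over the edges incident with a vertex $v$ gives $|E^+_D(v)|-|E^-_D(v)|\equiv\partial f(v)=b(v)\pmod 3$, as required.

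I do not expect a genuine obstacle here: the content is simply the familiar correspondence between nowhere-zero $Z_3$-flows and mod-$3$ orientations, transported verbatim to the prescribed-boundary setting. The only point requiring care is the observation that reversing an edge carrying value $2$ is the same as an edge carrying value $1$ in the opposite direction, since $-2\equiv 1\pmod 3$.
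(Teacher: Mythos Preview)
Your proof is correct and follows essentially the same approach as the paper's own proof: both exploit the observation that $Z_3^*=\{1,-1\}$, so reversing each edge carrying value $2$ converts the nowhere-zero $Z_3$-valued $f$ with $\partial f=b$ into an orientation whose in/out imbalance equals $b$ modulo $3$. The paper frames this as an ``if and only if'' characterization of $Z_3$-connectivity, but the argument and the key reversal trick are identical to yours.
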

\begin{proof}
By the definition of $Z_3$-connectivity, we know that $G$ is $Z_3$-connected if and only
if for any $b: V(G) \rightarrow Z_3$ with $\sum_{v\in V(G)} b(v)=0$, there exists an orientation and function $f: E(G)\rightarrow Z_3^*$
such that  $\partial f(v)=b(v)$ (mod 3) for each $v\in V(G)$. We know that $Z_3$-connectivity is independent on the orientation of graph.
For above $b$ and $f$, We only need to focus on edges of $f(e)=2$.
If $f(e)=2$, then we can invert
the orientation of $e$ and let $f(e)=1$, the others maintain. In this way we can get a new orientation of $G$ and a new function $f'$ on $E(G)$,
such that $\partial f'(v)=b(v)$ for each $v\in V(G)$.
Follow this way, finally we can get a new orientation and a new function $f'': E(G)\rightarrow Z_3$
such that $f''(e)=1$ and $\partial f''(v)=b(v)$ for each $e\in E(G)$ and each $v\in V(G)$.
Thus we can deduce that $G$ is $Z_3$-connected if and only
if for any $b: V(G) \rightarrow Z_3$ with $\sum_{v\in V(G)} b(v)=0$, there exists an orientation and function $f: E(G)\rightarrow Z_3^*$
such that  $f(e)=1$ and $\partial f(v)=b(v)$ for each $e\in E(G)$ and each $v\in V(G)$.
In this case, $\partial f(v)=|E^+(v)|-|E^-(v)|$ since $f(e)=1$. That is,
$G$ is $Z_3$-connected if and only if for any $b: V(G) \rightarrow Z_3$ with $\sum_{v\in V(G)} b(v)=0$, there exists an orientation
such that $|E^+(v)|-|E^-(v)|=b(v)$ for each $v\in V(G)$.
We are done.
\end{proof}

\begin{lemma}
\label{g6}
If $G$ is one of graphs in Fig. 1, then $G$ is not $Z_3$-connected.
\end{lemma}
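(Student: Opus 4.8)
The plan is to prove Lemma~\ref{g6} by exhibiting, for each of the 18 graphs $G^1, \dots, G^{18}$ in Fig.~1, a specific obstruction to $Z_3$-connectivity and then invoking Lemma~\ref{not Z}. That is, for each such $G$ I would produce a function $b: V(G) \to Z_3$ with $\sum_{v} b(v) = 0$ for which no orientation of $G$ satisfies $|E^+(v)| - |E^-(v)| \equiv b(v) \pmod 3$ for all $v$. The most economical route is to handle the graphs in groups according to structural features rather than one-by-one from scratch.

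The key steps, in order, would be as follows. First, dispose of any $G^i$ in Fig.~1 that happens to satisfy the Ore-condition: by Theorem~\ref{ore}, such a graph is not $Z_3$-connected provided it appears among the 12 graphs of Fig.~3, so one only needs to check which of the Fig.~1 graphs coincide with (or are covered by) the Fig.~3 list. Second, for graphs containing an odd wheel or, more generally, expressible as a $2$-sum $H_1 \oplus \cdots \oplus H_k$ of odd wheels and triangles that is triangularly connected, apply Lemma~\ref{fan} directly to conclude they are not $A$-connected for $A = Z_3$. Third, for the remaining graphs I would fall back on the counting/parity obstruction: choose $b \equiv 0$ (testing for a nowhere-zero $Z_3$-flow, equivalently an orientation in which $|E^+(v)| \equiv |E^-(v)| \pmod 3$ at every vertex) when the degree sequence already forbids it, and otherwise pick $b$ supported on a small set of low-degree vertices so that a short case analysis on the orientations of the few edges incident to those vertices rules out every possibility. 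Here Theorem~\ref{luo} is a useful sanity check: the graphs $G^3, G^5, G^{18}$ (and ${G^3}'$) are already known not to admit a nowhere-zero $3$-flow, so for those the choice $b \equiv 0$ works immediately, and $Z_3$-connectivity is a strictly stronger property than having a nowhere-zero $Z_3$-flow, so failing the latter kills the former.

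The main obstacle will be the bookkeeping for those graphs of Fig.~1 that \emph{do} admit a nowhere-zero $3$-flow (all but $G^3, G^5, G^{18}$) and that are not triangularly connected in a way that Lemma~\ref{fan} handles: for each of these I must actually exhibit a witness $b \neq 0$ and verify non-existence of a compatible orientation. This is a finite but genuinely case-dependent verification, and the art lies in choosing $b$ cleverly — concentrating its support near a vertex of degree $3$ or near a small edge cut — so that the orientation analysis collapses to checking a handful of configurations rather than exponentially many. I would organize this as a short lemma-by-picture argument, presenting the chosen $b$ for each graph in a compact table and giving the one- or two-line contradiction in each case, with the triangularly-connected and Ore-condition cases folded in by citation to Lemma~\ref{fan} and Theorem~\ref{ore} respectively.
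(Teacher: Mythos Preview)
Your plan matches the paper's proof almost exactly: the paper likewise disposes of $G^1$--$G^5$ via Theorem~\ref{ore}, of $G^{10}, G^{16}, G^{18}$ via Lemma~\ref{fan}, and handles $G^9, G^{13}, G^{14}, G^{15}, G^{17}$ by an explicit choice of $b$ together with Lemma~\ref{not Z}. The one device the paper adds that you do not mention is a subgraph shortcut via Lemma~\ref{o}(1)(6)(7): once $G^9, G^{10}, G^{17}$ are shown not $Z_3$-connected, the graphs $G^8\subseteq G^9$, $\{G^7, G^{11}, G^{12}\}\subseteq G^{10}$, and $G^6\subseteq G^{17}$ follow by contraposition, sparing five of the explicit case analyses you anticipate.
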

\begin{proof}
By Theorem~\ref{ore},  each graph in $\{G^1, G^2, G^3, G^4, G^5\}$
is not $Z_3$-connected.

If $G\cong G^{9}$, then let $b(v)=0$ for each 3-vertex and 5-vertex in $G$ and $b(v)=1$ for each 4-vertex in $G$.
By Lemma~\ref{not Z}, we only need to proof that there exists no orientation such that $|E^+(v)|-|E^-(v)|=b(v)$ (mod 3) for each $v\in V(G)$.
Since $b(v_5)=b(v_6)=0$, we can orient edges such that $E^+(v_5)=0$ and $E^+(v_6)=3$ or $E^+(v_5)=3$ and $E^+(v_6)=0$.
In the former case, we can orient edges $v_3v_1, v_3v_2, v_3v_4$ all with $v_3$ as a tail (or all with $v_3$ as a head) by $b(v_3)=0$.
WLOG, we assume edges $v_3v_1, v_3v_2, v_3v_4$ all with $v_3$ as a tail. Then $E^+(v_1)=0$ by $b(v_1)=0$.
In this case, we must orient edges $v_2v_4$, $v_4v_7$ both with $v_4$ as a head. But we cannot orient $v_2v_7$, such that $|E^+(v_7)|-|E^-(v_7)|=1$ $($mod 3$)$.
The proof of the latter case is similar as above.
Thus $G^{9}$ is not $Z_3$-connected by Lemma~\ref{not Z}.
Since $G^8$ is a spanning subgraph of $G^9$, $G^8$ is not $Z_3$-connected by Lemma~\ref{o} (1) (6).

By Lemma~\ref{fan}, each graph of $\{G^{10}$, $G^{16}, G^{18}\}$ is not $Z_3$-connected.

Since $G\in \{G^7, G^{11}, G^{12}\}$ is a spanning subgraph of $G^{10}$, each graph of $\{G^7$, $G^{11}, G^{12}\}$ is not $Z_3$-connected by Lemma~\ref{o} (1) (6).

If $G\cong G^{13}$, then let $b(v)=0$ for each 3-vertex in $G$ and $b(v)=1$ for each 4-vertex in $G$.
That is $b(v_1)=b(v_3)=b(v_5)=b(v_6)=0$, $b(v_2)=b(v_4)=b(v_7)=1$.
We first orient edges adjacent to vertices with $b(v)=0$. Then in either case, we can orient edges $\{v_4v_2, v_4v_7\}$ both with $v_4$ as a head by $b(v_4)=1$
and orient edges $\{v_4v_7, v_2v_7\}$ both with $v_7$ as a head by $b(v_7)=1$. That is edge $v_4v_7$ has two orientations, a contradiction.
By Lemma~\ref{not Z}, $G^{13}$ is not $Z_3$-connected.

If $G\cong G^{14}$, then let $b(v)=0$ for each 3-vertex in $G$ and $b(v)=1$ for each 4-vertex in $G$.
We first orient edges adjacent to vertices with $b(v)=0$. WLOG, we assume $E^+(v_7)=3$ and $E^+(v_8)=0$.
Then we can orient edges $\{v_1v_5, v_3v_5\}$ both with $v_5$ as a head by $b(v_5)=1$
and orient edges $\{v_2v_6, v_4v_6\}$ both with $v_6$ as a head by $b(v_6)=1$.  Then we can orient edges
$\{v_1v_3, v_2v_3, v_4v_3\}$ all with $v_3$ as a head (a tail) by $b(v_3)=1$ and orient edges
$\{v_1v_4, v_2v_4, v_4v_3\}$ all with $v_4$ as a tail (a head) by $b(v_4)=1$.
In either case, we cannot orient edge $v_1v_2$, such that $|E^+(v_1)|-|E^-(v_1)|=0$ $($mod 3$)$.
By Lemma~\ref{not Z}, $G^{14}$ is not $Z_3$-connected.

If $G\cong G^{15}$, then let $b(v_1)=b(v_2)=b(v_3)=b(v_7)=0$ and $b(v_5)=b(v_6)=1$, $b(v_4)=b(v_8)=2$.
We first orient edges adjacent to vertices $v_1$ and $v_2$. Then in either case, we can orient edges $\{v_4v_3, v_4v_8\}$ both with $v_4$ as a tail by $b(v_4)=2$.
Then we orient edges $v_3v_6, v_3v_5$ both as $v_3$ as a head by $b(v_3)=0$.
Since $b(v_5)=b(v_6)=1$, edges $\{v_5v_6, v_5v_8, v_5v_7\}$ with $v_5$ as a head (or a tail) and $\{v_5v_6, v_7v_6, v_8v_6\}$ all with $v_6$ as a tail (or a head).
In either case, we cannot orient edges $v_7v_8$, such that $|E^+(v_7)|-|E^-(v_7)|=0$ $($mod 3$)$.
By Lemma~\ref{not Z}, $G^{15}$ is not $Z_3$-connected.

If $G\cong G^{17}$, then let $b(v)=0$ for each 3-vertex in $G$ and $b(v)=1$ for each 4-vertex in $G$.
By the similar discussion, we can not find an orientation such that $|E^+(v)|-|E^-(v)|=0$ $($mod 3$)$ for each $v\in V(G)$.
Thus $G^{17}$ is not $Z_3$-connected by Lmmma~\ref{not Z} .
$G^6$ is a subgraph of $G^{17}$. If $G^6$ is $Z_3$-connected, then $G^{17}$ is $Z_3$-connected by Lemma~\ref{o} (6) (7), a contradiction. Thus $G^6$ is not
$Z_3$-connected.
\end{proof}

\section{The case when $\delta(G)\ge 4$}

\begin{lemma}\label{4}
Suppose $G$ is a 3-edge-connected graph with $\delta(G)\ge 4$. If $\alpha (G)\le 2$, then $G$ is $Z_3$-connected.
\end{lemma}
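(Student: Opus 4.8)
\textbf{Proof proposal for Lemma~\ref{4}.}

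The plan is to exploit the very strong structure forced by $\alpha(G)\le 2$: the complement $\overline G$ is triangle-free, so by a Ramsey-type or direct counting argument $G$ is dense, and in particular it contains large cliques or near-cliques. Since $G$ is $3$-edge-connected with $\delta(G)\ge 4$, I would first dispose of the small cases $n=|V(G)|\le 6$ or so by hand, checking that each such graph satisfies the hypotheses only if it already contains a $Z_3$-connected subgraph (an even wheel, a $K_5$ or $K_5^-$, or a $K_{m,n}$ with $m\ge n\ge 4$) spanning enough of $G$ that Lemma~\ref{o}(6),(7) finishes the job. For the general case I would aim to locate inside $G$ a $Z_3$-connected subgraph $H$ on almost all vertices and then attach the few remaining vertices one at a time: each leftover vertex $v$ has $d_G(v)\ge 4$, and since $\alpha(G)\le 2$ it has at most one non-neighbor in $V(H)$, so $e(v,H)\ge |V(H)|-1-(\text{a bounded number})\ge 2$ as soon as $H$ is large, whence $G[V(H)\cup\{v\}]$ is $Z_3$-connected by Lemma~\ref{o}(7). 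Iterating yields that $G$ itself is $Z_3$-connected.

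To produce the core subgraph $H$ I would argue as follows. Because $\overline G$ is triangle-free, its components are bipartite-like in a weak sense; more usefully, any maximal independent set has size $\le 2$, so $V(G)$ is covered by a clique plus at most one extra vertex, or by two cliques. Concretely: take a maximum clique $Q$. If $|Q|\ge 5$ then $G[Q]$ contains $K_5$, which is $Z_3$-connected by Lemma~\ref{o}(2), and I attach the rest as above — here I must check that every vertex outside $Q$ sees at least two vertices of $Q$, which follows from $\alpha\le 2$ together with $\delta\ge 4$ unless $|Q|$ is small, forcing $|Q|\ge 5$ to be handled jointly with a case analysis on how $V(G)\setminus Q$ splits. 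If instead the maximum clique has size $4$, then $V(G)$ is the union of two cliques of size $\le 4$ (else an independent set of size $3$ appears), so $n\le 8$; this is a genuinely finite family and I would enumerate the $3$-edge-connected, $\delta\ge 4$ graphs on $\le 8$ vertices with $\alpha\le 2$, showing each contains a spanning even wheel or a $K_{4,4}$/$K_{4,5}$ minor-style subgraph, or is handled by Lemma~\ref{1} (the edge-splitting lemma $G_{[uv,uw]}$) to reduce to a smaller $Z_3$-connected graph. The wheel cases use Lemma~\ref{o}(5): an even wheel is $Z_3$-connected, and a vertex of degree $\ge 4$ on the rim together with $\alpha\le 2$ tends to force the "wheel-plus-chords" to be even or to contain $K_5^-$.

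The main obstacle I anticipate is the boundary case where the maximum clique has size exactly $4$: there the graph need not contain $K_5$ or $K_5^-$ outright, and one is forced into the finite but irritating enumeration of graphs on $7$ or $8$ vertices, distinguishing exactly those that end up being the bad graphs of Fig.~1 (which is why the lemma as stated — $\delta\ge 4$ rather than merely $3$-edge-connected — is essential: raising the minimum degree to $4$ should eliminate all the Fig.~1 exceptions, since inspection of Fig.~1 shows each has a vertex of degree $3$). So a clean way to organize the proof is: (i) reduce to $\alpha(\overline G\text{ triangle-free})$ structure and split on maximum clique size; (ii) for clique size $\ge 5$, grow a $K_5$ by Lemma~\ref{o}(7); (iii) for clique size $4$, bound $n\le 8$ and finish by direct inspection plus Lemmas~\ref{o}, \ref{1}, \ref{fan}, using the $\delta\ge 4$ hypothesis to rule out every Fig.~1 graph and every odd-wheel $2$-sum. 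The bookkeeping in step (iii) is where essentially all the work lies; steps (i)–(ii) are short.
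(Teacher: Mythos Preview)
Your decomposition by maximum clique size contains a genuine error. You assert that if $\omega(G)=4$ then ``$V(G)$ is the union of two cliques of size $\le 4$ (else an independent set of size $3$ appears), so $n\le 8$.'' This is the claim that $\alpha(G)\le 2$ forces $V(G)$ to be covered by two cliques, i.e.\ that $\overline G$ triangle-free implies $\overline G$ bipartite---which is false. A concrete witness satisfying all the hypotheses of the lemma is $G=\overline{C_7}$: it is $4$-regular, $3$-edge-connected, has $\alpha(G)=2$, yet $\omega(G)=3$ and $V(G)$ is not a union of two cliques. Your case split never reaches this graph (you treat only $\omega\ge 5$ and $\omega=4$), and even in the $\omega=4$ branch your bound $n\le 8$ is unjustified. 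The attachment step in the $\omega\ge 5$ case is also shakier than you suggest: a vertex outside a maximum clique $Q$ can perfectly well have $0$ or $1$ neighbours in $Q$ without creating an independent triple, since any two non-neighbours inside $Q$ are adjacent to each other.

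The paper sidesteps all of this by splitting not on clique size but on the Ore-condition. If $d(u)+d(v)\ge n$ for every non-edge $uv$, Theorem~\ref{ore} applies and one checks that each of its twelve exceptional graphs has a vertex of degree $\le 3$, so $\delta\ge 4$ finishes. Otherwise some non-adjacent pair $x,y$ has $d(x)+d(y)\le n-1$, whence $|N(x)\cap N(y)|\le 1$; the key structural fact (which your ``two cliques'' claim was groping toward) is that $G[N[x]\setminus\{u\}]$ and $G[N[y]\setminus\{u\}]$ are genuine cliques of size $\ge 4$, because every vertex of $N(x)\setminus N[y]$ is a non-neighbour of $y$. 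From there a short case analysis on whether these cliques have size $4$ or $\ge 5$, using Lemma~\ref{o}(2),(6),(7) and one application of Lemma~\ref{1}, completes the proof. If you want to salvage your outline, replace the clique-size split by this Ore/non-Ore split; the ``violating pair'' is what hands you the two-clique structure honestly.
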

\begin{proof}
Clearly, we can assume $G$ is simple; otherwise, we can contracted $G$ into $G'$ by contracting 2-cycles. By Lemma~\ref{o} (3) (6), $G'$
is $Z_3$-connected if and only if $G$ is $Z_3$-connected.
Since $\delta(G)\ge 4$, $n\ge 5$.
When $n=5$, $G\cong K_5$, by Lemma~\ref{o} (2), $G$ is $Z_3$-connected.
Then we assume $n\ge 6$. By Lemma~\ref{o} (2), We only need to discuss the case $\alpha (G)=2$.

If $d(u)+d(v)\ge n$ for each $uv \notin E(G)$, then
$G$ satisfies the Ore-condition. By Theorem~\ref{ore} and since $\delta(G)\ge 4$, $G$ is $Z_3$-connected.

Thus there exists two non-adjacent vertices  $u, v$ such that $d(u)+d(v)\le n-1$.

Set $x, y$ be such vertices of $G$, that is $d(x)+d(y)\le n-1$ and $xy\notin E(G)$.
Since $\alpha (G)= 2$, $e(v, \{x, y\})\ge 1$ for each $v\in V(G)-\{x, y\}$. Then $|N(x)\cap N(y)|\le 1$ by $d(x)+d(y)\le n-1$.

Case 1. $|N(x)\cap N(y)|=0$.

In this case, $G[N[x]]$ and $G[N[y]]$ is a complete graph $K_{m_1}$,  $K_{m_2}$ ($m_1, m_2\ge 5$) since $\alpha(G)=2$ and $\delta(G)\ge 4$.
By Lemma~\ref{o} (2), $G[N[x]]$ and $G[N[y]]$ is $Z_3$-connected. Since $G$ is
3-edge connected, $G$ is $Z_3$-connected by Lemma~\ref{o} (2) (3) (6).

Case 2. $|N(x)\cap N(y)|=1$.

Suppose $u\in N(x)\cap N(y)$. Similarly, we know that $G[N[x]-\{u\}]$ and $G[N[y]-\{u\}]$ is a complete graph.
Suppose $G[N[x]-\{u\}]\cong K_{m_1}$, $G[N[y]-\{u\}]\cong K_{m_2}$. Clearly, $m_i\ge 4$ for each $i\in \{1, 2\}$.

If $m_i=4$ for each $i\in \{1, 2\}$, then $G[N[x]-\{u\}]\cong K_4$, $G[N[y]-\{u\}]\cong K_4$. Suppose $N(x)=\{x_1, x_2, x_3, u\}$ and $N(y)=\{y_1, y_2, y_3, u\}$.
If $e(u, N[x])\ge 3$, then $N[x]$ contains a $K_5^-$ as a subgraph,
by Lemma~\ref{o} (2), $G[N[x]]$ is $Z_3$-connected. Since $G$ is 3-edge-connected, $e(N[x], N[y]-\{u\})\ge 3$, by Lemma~\ref{o} (2) (6), $G$ is $Z_3$-connected.
WLOG, we assume $1 \le e(u, N[x])\le 2$ and $1 \le e(u, N[y])\le 2$. Then there are at least two vertices in $\{x_1, x_2, x_3\}$ which is not adjacent to $u$ and at least
two vertices in $\{y_1, y_2, y_3\}$ which is not adjacent to $u$. WLOG, we assume $x_1, x_2, y_1, y_2\notin N(u)$. In this case, $x_iy_j\in E(G)$ for each $i, j\in \{1, 2\}$ since $\alpha (G)=2$.
Then we can get a trivial graph $K_1$ by contracting 2-cycles from $G_{[yy_1, yy_2]}$. By Lemma~\ref{o} (3) (6), $G_{[yy_1, yy_2]}$ is $Z_3$-connected.
By Lemma~\ref{1}, $G$ is $Z_3$-connected.

If $m_1=4$ and $m_2\ge 5$, then $G[N[x]-\{u\}]\cong K_4$ and $G[N[y]-\{u\}]$ is $Z_3$-connected by Lemma~\ref{o} (2).
If $e(u, N[y]-\{u\})\ge 2$, then $G[N[y]]$ is $Z_3$-connected by Lemma~\ref{o} (7).
Since $G$ is 3-edge-connected, $e(N[x]-\{u\}, N[y])\ge 3$, by Lemma~\ref{o} (2) (6), $G$ is $Z_3$-connected. Suppose $e(u, N[y]-\{u\})=1$. If there exist $v\in N(x)$ such that
$vu\notin E(G)$, then $e(v, N(y))=m_2-1\ge 4$. Thus $G[N[y]\cup \{v\}-\{u\}]$ is $Z_3$-connected by Lemma~\ref{o} (7).
Since $G$ is 3-edge connected, $G$ is $Z_3$-connected by Lemma~\ref{o} (2) (6).
Thus $e(u, N[x]-\{u\})=4$, this means $G[N[x]]$ is  $K_5$, by Lemma~\ref{o} (2),  $G[N[x]]$ is $Z_3$-connected.
Since $G$ is 3-edge connected, $G$ is $Z_3$-connected by  Lemma~\ref{o} (6) (7).

If $m_i\ge 5$ for each $i\in \{1, 2\}$, then $G[N[x]-\{u\}]$ and $G[N[y]-\{u\}]$ is $Z_3$-connected by Lemma~\ref{o} (2).
Since $G$ is 3-edge connected, $G$ is $Z_3$-connected by Lemma~\ref{o} (6) (7).
\end{proof}

\section{Proof of Theorem~\ref{th1}}

In this section, we define $\cal F$ be a family of 3-edge connected simple graphs $G$ which satisfies
$\alpha (G)= 2$ and $\delta(G)=3$.

\begin{lemma}\label{n-2}
Suppose $G\in \cal F$. If there exists two non-adjacent vertices $u, v$ such that $d(u)+d(v)=n-2$,
then either $G$ is one of the graphs $\{G^{15}, G^{16}, G^{17}, G^{18}\}$ shown in Fig. 1 or $G$ can be $Z_3$-contracted in to $\{K_1, K_4\}$.
\end{lemma}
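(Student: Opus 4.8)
The plan is to work with two non-adjacent vertices $x,y$ with $d(x)+d(y)=n-2$ and exploit, as in the proof of Lemma~\ref{4}, the fact that $\alpha(G)=2$ forces every vertex of $V(G)-\{x,y\}$ to see $x$ or $y$. From $d(x)+d(y)=n-2$ we get $|N(x)\cap N(y)|\le 2$, so the argument splits into the three cases $|N(x)\cap N(y)|\in\{0,1,2\}$. In each case I would first note that $G[N[x]-(N(x)\cap N(y))]$ and $G[N[y]-(N(x)\cap N(y))]$ are complete (since their non-neighbours of $x$, resp.\ $y$, would otherwise create an independent triple together with $y$, resp.\ $x$), and that the remaining vertices $w\in N(x)\cap N(y)$ attach to both sides. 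Since $\delta(G)=3$, these complete pieces have order at least $3$ (a vertex of $N(x)$ other than the shared ones has its $\ge 3$ neighbours inside $N[x]$), and in the generic situation they have order $\ge 5$, so by Lemma~\ref{o}(2) they are $Z_3$-connected; contracting them and using 3-edge-connectivity plus Lemma~\ref{o}(2)(3)(6)(7) collapses $G$ to $K_1$. The substance of the lemma is therefore the handful of small configurations where one or both cliques have order $3$ or $4$.

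For those boundary configurations I would proceed exactly in the style of Lemma~\ref{4}: when a side is a $K_4$ (or $K_3$) together with a shared vertex $u$, split on $e(u,\cdot)$. If $u$ sends $\ge 2$ edges into a $K_4$ we get a $K_5^-$ and win by Lemma~\ref{o}(2); if $u$ sends few edges, then enough vertices of $N(x)$ (or $N(y)$) miss $u$, and by $\alpha(G)=2$ these vertices are joined across to the other side, creating multi-edges after a suitable $2$-cycle contraction, at which point Lemma~\ref{1} (the $G_{[uv,uw]}$ trick applied at $x$ or $y$, which is legitimate once $d(x)\ge4$ or $d(y)\ge4$) or direct contraction to $K_1$ or $K_4$ finishes it. The cases where \emph{both} sides are as small as possible — $d(x)=d(y)=3$, or one of them $3$ and the other $4$, with $|N(x)\cap N(y)|$ taking each admissible value — are finitely many graphs on few vertices; for each I would either produce an explicit $Z_3$-contraction to $K_1$ or $K_4$, or, if it fails, identify it with one of $G^{15},G^{16},G^{17},G^{18}$ and invoke Lemma~\ref{g6}. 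Checking that $G^{15}$--$G^{18}$ are precisely the obstructions that arise here is the bookkeeping heart of the argument.

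The main obstacle I anticipate is not any single case but the combinatorial case analysis when the two cliques are both of order $3$ or $4$: there one must pin down the adjacencies between $\{x_1,x_2,x_3\}$-type vertices, $\{y_1,y_2,y_3\}$-type vertices, and the at most two shared neighbours, using only $\alpha(G)\le2$, $3$-edge-connectivity and $n=d(x)+d(y)+2$, and then decide for each resulting graph whether it $Z_3$-contracts to $\{K_1,K_4\}$ or is one of the four exceptional graphs. A secondary nuisance is justifying the applicability of Lemma~\ref{1}: it requires degree $\ge 4$ at the vertex where we split $\{uv,uw\}$ into $vw$, so one must be careful to apply it at whichever of $x,y$ has degree $\ge 4$ (and handle the truly small case $d(x)=d(y)=3$, i.e.\ $n=8$, by hand). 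Everything else is routine once the cliques are recognized and 3-edge-connectivity is used to guarantee at least three edges leaving each contracted part.
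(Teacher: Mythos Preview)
Your plan contains an arithmetic slip that changes the shape of the whole argument. You correctly observe that $\alpha(G)=2$ forces every vertex of $V(G)\setminus\{x,y\}$ into $N(x)\cup N(y)$, so $|N(x)\cup N(y)|=n-2$; but then inclusion--exclusion gives
\[
|N(x)\cap N(y)| \;=\; d(x)+d(y) - |N(x)\cup N(y)| \;=\; (n-2)-(n-2)\;=\;0,
\]
not $\le 2$. There are \emph{no} shared neighbours in this lemma, and all of your machinery about a vertex $u\in N(x)\cap N(y)$ and splitting on $e(u,\cdot)$ belongs to Lemma~\ref{n-1} (the case $d(x)+d(y)=n-1$), not here. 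Consequently $V(G)$ is partitioned into the two cliques $G[N[x]]\cong K_{m_1}$ and $G[N[y]]\cong K_{m_2}$; since $\delta(G)=3$ both $m_i\ge 4$, and since $\delta(G)=3$ exactly (this is built into $\mathcal F$) at least one of them equals~$4$, say $m_1=4$.

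The paper's proof then has only two cases. If $m_2\ge 5$, contract the $Z_3$-connected $K_{m_2}$; 3-edge-connectivity gives at least three cross edges, so $G$ contracts to $K_4$ or $K_1$. If $m_2=4$ (so $n=8$ and $d(x)=d(y)=3$), the entire analysis is on the number and pattern of edges in $e(N(x),N(y))$: value $3$ yields $G^{15},G^{16},G^{17}$; value $4$ yields $G^{18}$ or a graph with a $4$-vertex $x_1$, where one applies Lemma~\ref{1} at $x_1$ (not at $x$ or $y$ as you worried---$x_1$ automatically has degree $\ge 4$ once it carries a cross edge) and then contracts $2$-cycles; value $\ge 5$ is handled by the same $G_{[x_1y_1,x_1y_2]}$ trick. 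Your ``secondary nuisance'' about applying Lemma~\ref{1} at $x$ or $y$ therefore does not arise, and the ``at most two shared neighbours'' bookkeeping you anticipated is simply absent.
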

\begin{proof}
Set $x, y$ be such vertices of $G$, that is $d(x)+d(y)= n-2$ and $xy\notin E(G)$.
Since $\alpha (G)= 2$, $e(v, \{x, y\})\ge 1$ for each $v\in V(G)-\{x, y\}$.
Since $d(x)+d(y)= n-2$, $|N(x)\cap N(y)|=0$.

In this case, $G[N[x]]$ and $G[N[y]]$ is a complete graph $K_{m_1}$,  $K_{m_2}$ ($m_1, m_2\ge 4$) by $\alpha(G)=2$ and $\delta(G)=3$.
If $m_i\ge 5$ for each $i\in \{1, 2\}$, then $\delta(G)\ge 4$, contrary to $\delta (G)=3$.
Thus we need to discuss cases of $m_i=4$ for some $i\in \{1, 2\}$. Assume $m_1=4$. Let $N(x)=\{x_1, x_2, x_3\}$.

Case 1. $m_2=4$.

Let $N(y)=\{y_1, y_2, y_3\}$. Since $G$ is 3-edge connected, $e(N(x), N(y))\ge 3$. If there exists one vertex, say $x_1$, such that $e(x_1, N(y))\ge 3$, then $G[N[y]\cup {x_1}]$
is $Z_3$-connected by Lemma~\ref{o} (2). In this case, if $e(v, N[y]\cup \{x_1\})=1$ for each $v\in N[x]-\{x_1\}$, then $G$ can be contracted into $K_4$; otherwise, $G$ is $Z_3$-connected.
Thus we assume $e(u, N[y])\le 2$ and $e(v, N[x])\le 2$ for each $u\in N(x)$ and $v\in N(y)$.

When $e(N(x), N(y))= 3$. Then $G$ is one of graphs $\{G^{15}, G^{16}, G^{17}\}$ in Fig. 1.

When $e(N(x), N(y))= 4$. Then either $G\cong G^{18}$ in Fig. 1 or $G$ contains a 4-vertex. In the latter case, we assume $d(x_1)=4$ and
$N(x_1)=\{x, x_2, x_3, y_1\}$. Then $e(\{x, x_2, x_3\}, N(y))=3$. Considering graph $G_{[x_1x, x_1x_2]}$.
Clearly, $\{x, x_2, x_3\}$ can be contracted into one vertex $v^*$ by contracting two 2-cycles and we called this new graph $G^*$. Since
$e(\{x, x_2, x_3\}, N(y))=3$, $d_{G^*}(v^*)=3+1=4$. That is $G^*$ contains a $K_5^-$ or $C_2$ as a subgraph. In either case, by Lemma~\ref{o} (2) (3) (6) (7), $G^*$ is $Z_3$-connected.
By Lemma~\ref{o} (3) (6), $G_{[x_1x, x_1x_2]}$ is $Z_3$-connected. Thus $G$ is $Z_3$-connected by Lemma~\ref{1}.

When $e(N(x), N(y))\ge 5$. We only need to prove the case  $e(N(x), N(y))= 5$.
In this case, we may assume $e(x_i, N(y))=2$ for each $i=1, 2$ and $e(x_3, N(y))=1$. WLOG, we assume $x_1y_1, x_1y_2\in E(G)$.
We can get a trivial graph $K_1$ by contracting 2-cycles from graph $G_{[x_1y_1, x_1y_2]}$.
By Lemma~\ref{o} (1) (3) (6), $G_{[x_1y_1, x_1y_2]}$ is $Z_3$-connected.
Thus $G$ is $Z_3$-connected by Lemma~\ref{1}.

Case 2. $m_2\ge 5$.

By Lemma~\ref{o} (2), $G[N[y]]$ is $Z_3$-connected. Since $G$ is 3-edge connected, $e(N(x), N(y))\ge 3$.
We can deduce that $G$ can be contracted to $K_4$ or $G$ is $Z_3$-connected by Lemma~\ref{o} (2) (6).
\end{proof}

\begin{lemma}\label{n-1}
Suppose $G\in \cal F$ and $d(u)+d(v)\ge n-1$ for each $uv\notin E(G)$. If there exists two non-adjacent vertices $u, v$ such that $d(u)+d(v)=n-1$,
then either $G$ is one of the graphs $\{G^6, G^7, \ldots, G^{14}\}$ shown in Fig. 1 or $G$ can be $Z_3$-contracted in to $\{K_1, K_4\}$.
\end{lemma}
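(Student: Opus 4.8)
\textbf{Proof proposal for Lemma~\ref{n-1}.}

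The plan is to mimic the structure of Lemma~\ref{n-2}, but now with $d(x)+d(y)=n-1$ instead of $n-2$. Let $x,y$ be non-adjacent with $d(x)+d(y)=n-1$. As before, $\alpha(G)=2$ forces $e(v,\{x,y\})\ge 1$ for every $v\in V(G)-\{x,y\}$, and the count $d(x)+d(y)=n-1$ now yields exactly $|N(x)\cap N(y)|=1$; write $u$ for the common neighbour. Just as in Case~2 of Lemma~\ref{4}, the sets $G[N[x]-\{u\}]$ and $G[N[y]-\{u\}]$ are complete graphs $K_{m_1}$, $K_{m_2}$ with $m_i\ge 3$ (here $m_i$ can be as small as $3$ because $\delta(G)=3$ and $x$ or $y$ itself may be a $3$-vertex, whereas in Lemma~\ref{4} we had $m_i\ge 4$). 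I would first dispose of the easy large cases: if $m_i\ge 5$ for both $i$, then $G[N[x]-\{u\}]$ and $G[N[y]-\{u\}]$ are $Z_3$-connected by Lemma~\ref{o}(2), and $3$-edge-connectivity together with Lemma~\ref{o}(2)(6)(7) lets us absorb $u$ and everything else, so $G$ is $Z_3$-contractible to $K_1$. Similarly if one of them is $\ge 5$ and the other is $4$, the argument of Lemma~\ref{4} Case~2 (with $m_1=4$) carries over almost verbatim, yielding either contraction to $K_4$ or $Z_3$-connectivity. Hence the real work is confined to the cases where $m_1,m_2\in\{3,4\}$, i.e. $N[x]-\{u\}$ and $N[y]-\{u\}$ are triangles or $K_4$'s.

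In these small cases I would do a careful edge-count between the two ``sides'' and the apex $u$. Set $X=N[x]-\{u\}$, $Y=N[y]-\{u\}$, so $V(G)=X\cup Y\cup\{u\}$ with $|X|=m_1$, $|Y|=m_2$. Since $\alpha(G)=2$, every pair of non-adjacent vertices one in $X$ and one in $Y$ is impossible unless one of them is $x$ or $y$ — more precisely, for $x'\in X-\{x\}$ and $y'\in Y-\{y\}$ we must have $x'y'\in E(G)$; this pins down almost all cross edges. The degrees of $u$, of the vertices of $X$, and of the vertices of $Y$ are then constrained by $\delta(G)=3$ and $3$-edge-connectivity. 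I would split according to $e(u,X)$ and $e(u,Y)$ and according to how the ``extra'' edges (those making up the remaining degree of $X$- and $Y$-vertices beyond the forced clique and cross edges) are distributed. The standard moves are: (a) if some vertex $w\in X$ has $e(w,N[y])\ge 3$ or $e(w, Y\cup\{u\})$ large, then $G[N[y]\cup\{w\}]$ (or a slightly bigger piece) contains $K_5^-$ or an even wheel and is $Z_3$-connected, after which $3$-edge-connectivity finishes via Lemma~\ref{o}(6)(7), giving contraction to $K_4$ or to $K_1$; (b) when all cross-degrees are small, use the $G_{[uv,uw]}$ reduction of Lemma~\ref{1} at a $4$-vertex (there must be one once we are not in a sporadic case) to split off a vertex and collapse a double edge, reducing to a smaller $Z_3$-connected graph (a $C_2$, a $K_5^-$, or $K_1$) and concluding by Lemma~\ref{o}(3)(6); (c) the residual tightly-constrained configurations that resist both (a) and (b) are exactly the graphs $G^6,\dots,G^{14}$ in Fig.~1, which are already known not to be $Z_3$-connected by Lemma~\ref{g6}. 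I would also invoke Lemma~\ref{fan} (triangular connectivity) and Lemma~\ref{o}(5) to handle wheel-like pieces directly when the structure is triangularly connected.

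The main obstacle is the bookkeeping in case~(c): showing that the configurations which survive all the reductions are \emph{precisely} the listed graphs $G^6,\dots,G^{14}$ and not something new. Concretely, with $m_1,m_2\le 4$ the graph has at most $n=m_1+m_2+1\le 9$ vertices, so this is a finite check, but it is delicate because several sub-cases ($e(u,X)\in\{0,1,2,3,4\}$, and likewise for $Y$, times the placement of the few cross edges forced by $\alpha(G)=2$) must each be run to ground, and in each one must either exhibit an explicit $Z_3$-reduction to $\{K_1,K_4\}$ or recognize the graph in Fig.~1. I expect the cleanest organization is: fix $m_1\le m_2$, then case on $m_2\in\{3,4\}$, then on $e(u,N[y]-\{u\})$, using the three reduction moves above, and quoting Lemma~\ref{g6} for the exceptional outcomes. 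The lemma statement, with its explicit finite exception list, essentially tells us the case analysis must terminate exactly there, which is the guide for the write-up.
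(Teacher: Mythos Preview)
Your overall framework matches the paper's: pick $x,y$ with $d(x)+d(y)=n-1$, get a unique common neighbour $u$, observe that $G[N[x]-\{u\}]\cong K_{m_1}$ and $G[N[y]-\{u\}]\cong K_{m_2}$, and then case on $(m_1,m_2)$ with the large-$m_i$ cases handled by Lemma~\ref{o}(2)(6)(7) and the small ones by explicit reductions via Lemma~\ref{1}. That is exactly what the paper does (their Cases~1--6), so the strategy is right.

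There is, however, a genuine error in your key structural claim. You assert that $\alpha(G)=2$ forces $x'y'\in E(G)$ for \emph{every} $x'\in X-\{x\}$ and $y'\in Y-\{y\}$. This is false: $\{x',y'\}$ is an independent pair of size~2, which is permitted; a contradiction arises only if some third vertex is non-adjacent to both. All vertices of $X$ are adjacent to $x'$ and all vertices of $Y$ to $y'$, so the only candidate third vertex is $u$. Thus $x'y'\in E(G)$ is forced precisely when $ux'\notin E(G)$ \emph{and} $uy'\notin E(G)$. The paper uses exactly this refined version (e.g.\ in Case~1 with $N(u)=\{x,x_1,y\}$ they force $x_2y_1,x_2y_2$ but not $x_1y_1,x_1y_2$, and the presence or absence of the latter is what distinguishes $G^{12}$, $G^{13}$ from the $Z_3$-connected outcome). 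With your over-strong claim the exceptional graphs $G^6,\dots,G^{14}$ would not appear in the analysis, so this is not a harmless slip.

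A second gap: you never use the global hypothesis that $d(u)+d(v)\ge n-1$ for \emph{all} non-adjacent pairs, not just the chosen $x,y$. The paper invokes it in Case~4 ($m_1=3$, $m_2=4$) to force $d(y_i)\ge 4$ for the $y_i$'s not adjacent to $u$ (since $y_i x\notin E(G)$ gives $d(y_i)\ge n-1-d(x)=4$), which in turn forces enough cross-edges to run the $G_{[\cdot,\cdot]}$ reduction. Without this you cannot rule out some $3$-vertices on the $Y$-side and the case analysis does not close. Finally, you skip the mixed case $m_1=3$, $m_2\ge 5$ (the paper's Case~5); it is easy but needs a line, since your ``one $\ge 5$, other $=4$'' remark does not cover it.
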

\begin{proof}
Set $x, y$ be such vertices of $G$, that is $d(x)+d(y)= n-1$ and $xy\notin E(G)$.
Since $\alpha (G)= 2$, $e(v, \{x, y\})\ge 1$ for each $v\in V(G)-\{x, y\}$.
Since $d(x)+d(y)= n-1$, $|N(x)\cap N(y)|=1$.

Suppose $u\in N(x)\cap N(y)$. Similarly, we know that $G[N[x]-\{u\}]$ and $G[N[y]-\{u\}]$ is a complete graph.
Suppose $G[N[x]-\{u\}]\cong K_{m_1}$, $G[N[y]-\{u\}]\cong K_{m_2}$. Clearly, $m_i\ge 3$ for each $i\in \{1, 2\}$.

Case 1. Suppose $m_i=3$ for each $i=1, 2$.

Let $N(x)=\{x_1, x_2, u\}$, $N(y)=\{y_1, y_2, u\}$. Since $\delta(G)= 3$, $d(u)\ge 3$.

If $d(u)=3$, then WLOG, we assume $N(u)=\{x, x_1, y\}$. Since $\alpha(G)=2$, $x_2y_i\in E(G)$ for each $i\in \{1, 2\}$. Then either $G$ is one
of graphs $\{G^{12}, G^{13}\}$ in Fig. 1 or $x_1y_i\in E(G)$ for each $i\in \{1, 2\}$. In the latter case, we can get a trivial graph $K_1$ by
contracting 2-cycles till no 2-cycles exist from graph $G_{[x_2y_1, x_2y_2]}$. By Lemma~\ref{o} (1) (3) (6), $G_{[x_2y_1, x_2y_2]}$ is $Z_3$-connected.
By Lemma~\ref{1}, $G$ is $Z_3$-connected.

If $d(u)=4$, then $e(u, N[x])=2$ or $e(u, N[x])=3$ by symmetry. Suppose $e(u, N[x])=2$. WLOG, we assume $N(u)=\{x, x_1, y, y_1\}$.
Since $\alpha(G)=2$, $x_2y_2\in E(G)$. If $e(\{x_1, x_2\}, \{y_1, y_2\})=1$, then $G\cong G^{12}$. If $e(\{x_1, x_2\}, \{y_1, y_2\})=2$, then $G\in \{G^8, G^{13}\}$.
If $e(\{x_1, x_2\}, \{y_1, y_2\})\ge 3$, then we can get a trivial graph $K_1$ by contracting 2-cycles till no such subgraph exists from $G_{[ux, ux_1]}$.
By Lemma~\ref{o} (1) (3) (6),
$G_{[ux, ux_1]}$ is $Z_3$-connected. By Lemma~\ref{1}, $G$ is $Z_3$-connected.
Suppose $e(u, N[x])=3$. Then $N(u)=\{x, x_1, x_2, y\}$.
Since $G$ is 3-edge connected, $e(\{x_1, x_2\}, \{y_1, y_2\})\ge 2$.
If $e(\{x_1, x_2\}, \{y_1, y_2\})=2$, then $G$ is one of graphs $\{G^{6}, G^{7}\}$.
If $e(\{x_1, x_2\}, \{y_1, y_2\})\ge 3$, then we can get a trivial graph $K_1$ by contracting 2-cycles till no such subgraph exists from $G_{[ux, ux_1]}$. By Lemma~\ref{o} (1) (3) (6),
$G_{[ux, ux_1]}$ is $Z_3$-connected. By Lemma~\ref{1}, $G$ is $Z_3$-connected.

If $d(u)=5$, then WLOG, we assume $N(u)=\{x, x_1, x_2, y, y_1\}$.
If $e(x_i, \{y_1, y_2\})=2$, then $G$ contains a 4-wheel with $y_1$ as a hub.
Then we can deduce that $G$ is $Z_3$-connected by Lemma~\ref{o} (5) (6) (7).
Thus we assume $e(x_i, \{y_1, y_2\})\le 1$ for each $i\in \{1, 2\}$.
If $e(\{x_1, x_2\}, \{y_1, y_2\})=1$, then $G\cong G^7$. If $e(\{x_1, x_2\}, \{y_1, y_2\})=2$, then either $G\cong G^9$
or we can get a trivial graph $K_1$ by contracting 2-cycles till no such subgraph exists from $G_{[x_ix, x_ix_j]}$, where $x_iy_1\notin E(G)$ and $i\neq j$.
By Lemma~\ref{o} (1) (3) (6),
$G_{[x_ix, x_ix_j]}$ is $Z_3$-connected. By Lemma~\ref{1}, $G$ is $Z_3$-connected..

If $d(u)=6$, then $N(u)=\{x, x_1, x_2, y, y_1, y_2\}$.
If $e(\{x_1, x_2\}, \{y_1, y_2\})=0$, then $G\cong G^{11}$.
If $e(\{x_1, x_2\}, \{y_1, y_2\})=1$, then $G\cong G^{10}$.
Thus we assume $e(\{x_1, x_2\}, \{y_1, y_2\})\ge 2$. If there exists $i$, say $x_1$, such that $e(\{x_1\}, \{y_1, y_2\})= 2$, then $G[N[y]\cup x_1]$ is a
$K_5^-$. By Lemma~\ref{o} (2) (6), we can deduce that $G$ is $Z_3$-connected. Thus WLOG, we assume $x_1y_1, x_2y_2\in E(G)$. Considering graph $G_{[x_1y_1, x_1x_2]}$.
$G_{[x_1y_1, x_1x_2]}$ contains a 4-wheel with $y_1$ as a hub. We can get a new graph with 3 vertices and 4 edges by contracting this 4-wheel from $G_{[x_1y_1, x_1x_2]}$,
which is $Z_3$-connected by Lemma~\ref{o} (5) (6).
By Lemma~\ref{1}, $G$ is $Z_3$-connected.

Case 2. Suppose $m_i=4$ for each $i=1, 2$.

Suppose $N(x)=\{x_1, x_2, x_3, u\}$, $N(y)=\{y_1, y_2, y_3, u\}$. If $e(u, N[x])\ge 3$, then $G[N[x]]$ contains a $K_5^-$ as a subgraph.
By Lemma~\ref{o} (2), $G[N[x]]$ is $Z_3$-connected.
Since $G$ is 3-edge connected, $e(N[x], N[y]-\{u\})\ge 3$. Then $G/G[N[x]]$ contains a $K_5^-$ as a subgraph. By Lemma~\ref{o} (2), $G/G[N[x]]$ is $Z_3$-connected.
By Lemma~\ref{o} (6), $G$ is $Z_3$-connected.
Thus we may assume $1 \le e(u, N[x])\le 2$ and $1 \le e(u, N[y])\le 2$.
Since $G$ is 3-edge connected, $d(u)\ge 3$.

If $e(u, N[x])= 2$ and  $e(u, N[y])= 2$, then we assume, WLOG, we assume $N(u)=\{x, x_1, y, y_1\}$. Since $\alpha(G)=2$, $x_2y_i, x_3y_i\in E(G)$ for each $i=2, 3$.
In this case, $\delta(G)\ge 4$, contrary to $\delta(G)=3$.

If $e(u, N[x])= 2$ and $e(u, N[y])= 1$, then we assume, WLOG, we assume $N(u)=\{x, x_1, y\}$. Since $\alpha(G)=2$, $x_2y_i, x_3y_i\in E(G)$ for each $i=1, 2, 3$.
In this case, $G[N[y]\cup \{x_2, x_3\}-\{u\}]$ is $Z_3$-connected by Lemma~\ref{o} (2) (7). By Lemma~\ref{o} (7), $G$ is $Z_3$-connected.

Case 3. Suppose $m_i\ge 5$ for each $i=1, 2$. Clearly, $G[N[x]-\{u\}]$ and $G[N[y]-\{u\}]$ is $Z_3$-connected by Lemma~\ref{o} (2).
Since $\delta(G)= 3$, WLOG, we assume $e(u, N[x]-\{u\})= 2$.
Thus $G[N[x]]$ is $Z_3$-connected by Lemma~\ref{o} (7).
Since $G$ is 3-edge connected, $e(N[x], N[y]-\{u\})\ge 3$, $G$ is $Z_3$-connected by Lemma~\ref{o} (2) (6).

Case 4. Suppose $m_1=3$ and $m_2=4$.

Suppose $N(x)=\{x_1, x_2, u\}$, $N(y)=\{y_1, y_2, y_3, u\}$.
If $e(u, N[y]-\{u\})\ge 3$, then $G[N[y]]$ contains a $K_5^-$ as a subgraph. By Lemma~\ref{o} (2), $G[N[y]]$ is $Z_3$-connected.
Since $G$ is 3-edge connected, $e(N[x]-\{u\}, N[y])\ge 3$. Then either $G$ can be contracted to $K_4$ or $G$ is $Z_3$-connected.
Thus we assume $1 \le e(u, N[y]-\{u\})\le 2$ and $d(u)\ge 3$.

If $d(u)=3$, then $e(u, N[x]-\{u\})=1$ and $e(u, N[y]-\{u\})=2$ or
$e(u, N[x]-\{u\})=2$ and  $e(u, N[y]-\{u\})=1$.
In the former case, we assume, WLOG, $N(u)=\{x, y, y_1\}$. Since $\alpha(G)=2$, $x_1y_i, x_2y_i\in E(G)$ for each $i=2, 3$.
In this case, we can get a trivial graph $K_1$ by contracting 2-cycles from $G_{[x_1y_2, x_1y_3]}$.
By Lemma~\ref{o} (1) (3) (6), $G_{[x_1y_2, x_1y_3]}$ is $Z_3$-connected. By Lemma~\ref{1}, $G$ is $Z_3$-connected.
In the latter case, we assume, WLOG, $N(u)=\{x_1, x, y\}$. Since $\alpha(G)=2$, $x_2y_i\in E(G)$ for each $i=1, 2, 3$.
In this case, $G[N[y]\cup \{x_2\}-\{u\} ]$ is $Z_3$-connected by Lemma~\ref{o} (2). Then either $G$ is $Z_3$-connected or $G$ can contracted into $K_4$.

If $d(u)=4$, then $e(u, N[x]-\{u\})= 2$ and  $e(u, N[y]-\{u\})= 2$ or $e(u, N[x]-\{u\})= 3$ and  $e(u, N[y]-\{u\})= 1$.
Suppose $e(u, N[x]-\{u\})=2$ and  $e(u, N[y]-\{u\})=2$. We assume, WLOG, $N(u)=\{x, x_1, y, y_1\}$. Since $\alpha(G)=2$, $x_2y_i\in E(G)$ for each $i=2, 3$.
If no other edge, then $G\cong G^{14}$; otherwise, either we can get a trivial graph $K_1$ by contracting 2-cycles from $G_{[uy, uy_1]}$ or
$G{[N[y]]\cup \{x_2\}-\{u\}}$ contains a $K_5^-$ as a subgraph.
In the former case, by Lemma~\ref{o} (1) (3) (6), $G_{[uy, uy_1]}$ is $Z_3$-connected. By Lemma~\ref{1}, $G$ is $Z_3$-connected; in the latter case,
$G{[N[y]]\cup \{x_2\}-\{u\}}$ is $Z_3$-connected by Lemma~\ref{o} (2). By Lemma~\ref{o} (7), $G$ is also $Z_3$-connected.
Then suppose $e(u, N[x]-\{u\})= 3$ and $e(u, N[y]-\{u\})= 1$. Then $N(u)=\{x_1, x_2, x, y\}$.
Since $G$ is 3-edge connected, $e(\{x_1, x_2\}, \{y_1, y_2, y_3\})\ge 2$.
Since $d(u)+d(v)\ge n-1=7$ for each $uv\notin E(G)$, $d(y_i)\ge 4$ for each $i=1, 2, 3$. In this case, $e(\{x_1, x_2\}, \{y_1, y_2, y_3\})\ge 3$.
Then either we can get a trivial graph $K_1$ by contracting 2-cycles from $G_{[yy_1, yy_2]}$
or $G[N[y]\cup \{x_i\}-\{u\}]$ contains $K_5^-$ as spanning subgraph for some $i\in \{1, 2\}$.
In the former case, $G_{[yy_1, yy_2]}$ is $Z_3$-connected by Lemma~\ref{o} (1) (3) (6). By Lemma~\ref{1}, $G$ is $Z_3$-connected;
in the latter case, we can deduce $G$ is $Z_3$-connected by Lemma~\ref{o} (2) (7).

If $d(u)=5$, then $e(u, N[x]-\{u\})= 3$ and  $e(u, N[y]-\{u\})= 2$. WLOG, we assume $N(u)=\{x_1, x_2, x, y, y_1\}$.
Since $d(u)+d(v)\ge n-1=7$ for each $uv\notin E(G)$, $d(y_i)\ge 4$ for each $i=1, 2, 3$.
Then $e(\{x_1, x_2\}, \{y_1, y_2, y_3\})\ge 2$.
In this case, we can get a trivial graph $K_1$ by contracting 2-cycles and $K_5^-$ from graph $G_{[yy_1, yy_2]}$.
By Lemma~\ref{o} (1) (6), $G_{[yy_1, yy_2]}$ is $Z_3$-connected. By Lemma~\ref{1}, $G$ is $Z_3$-connected.

Case 5. Suppose $m_1=3$ and $m_2\ge 5$.

Clearly, $G[N[y]-\{u\}]$ is $Z_3$-connected by Lemma~\ref{o} (2). Since $\delta(G)\ge 3$, $e(u, N[x]-\{u\})\ge 2$ or $e(u, N[y]-\{u\})\ge 2$.
If $e(u, N[y]-\{u\})\ge 2$, then $G[N[y]]$ is $Z_3$-connected by Lemma~\ref{o} (7).
Since $G$ is 3-edge connected, $e(N[x]-\{u\}, N[y])\ge 3$.
Thus either $G$ is $Z_3$-connected or $G$ can be contracted into $K_4$.
Thus we assume $e(u, N[y]-\{u\})=1$ and $e(u, N[x]-\{u\})\ge 2$. Set $N(x)=\{x_1, x_2, u\}$. WLOG, we assume $x_1u\in E(G)$.
If $x_2u\in E(G)$, then $G[N[x]]$ is $K_4$. Since $G$ is 3-edge connected, $e(N[x], N[y]-\{u\})\ge 3$. Thus $G/G[N[y]-\{u\}]$ contains $K_5^-$ or $C_2$.
In either case, $G$ is $Z_3$-connected by Lemma~\ref{o} (2) (3) (6). If $x_2u\notin E(G)$, then
$x_2v\in E(G)$ for each $v\in N(y)-\{u\}$ by $\alpha(G)=2$. Thus $G[N[y]\cup \{x_2\}-\{u\}]$ is $Z_3$-connected by Lemma~\ref{o} (7).
Thus either $G$ is $Z_3$-connected or $G$ can be contracted into $K_4$.

Case 6. Suppose $m_1=4$ and $m_2\ge 5$.

Clearly, $G[N[y]-\{u\}]$ is $Z_3$-connected by Lemma~\ref{o} (2). Since $\delta(G)\ge 3$, $e(u, N[x]-\{u\})\ge 2$ or $e(u, N[y]-\{u\})\ge 2$.
If $e(u, N[y]-\{u\})\ge 2$, then $G[N[y]]$ is $Z_3$-connected by Lemma~\ref{o} (7).
Since $G$ is 3-edge connected, $e(N[x]-\{u\}, N[y])\ge 3$.
Thus $G/G[N[y]]$ contains $K_5^-$ or $C_2$.
In either case, $G$ is $Z_3$-connected by Lemma~\ref{o} (1) (2) (3) (6).
Thus $e(u, N[y]-\{u\})=1$ and $e(u, N[x]-\{u\})\ge 2$. Set $N(x)=\{x_1, x_2, x_3, u\}$. WLOG, we assume $x_1u\in E(G)$.
If $x_iu\in E(G)$ for some $i\in \{2, 3\}$, then $G[N[x]]$ contains $K_5^-$ as a subgraph, is $Z_3$-connected. Since $G$ is 3-edge connected, $e(N[x], N[y]-\{u\})\ge 3$.
Thus $G/G[N[x]]$ contains $K_5^-$ or $C_2$. By Lemma~\ref{o} (2) (3) (6), $G$ is $Z_3$-connected.
If  $x_iu\notin E(G)$ for each $i\in \{2, 3\}$, then
$x_iv\in E(G)$ for each $v\in N(y)-\{u\}$ and $i\in \{2, 3\}$ by $\alpha(G)=2$. Thus $G[N[y]\cup \{x_2, x_3\}-\{u\}]$ is $Z_3$-connected by Lemma~\ref{o} (7).
By Lemma~\ref{o} (7), $G$ is $Z_3$-connected.
\end{proof}

\vskip 0.5cm

\n \textbf{Proof of Theorem 1.3}
Since $G$ a is 3-edge connected simple graph, $n\ge 4$.
When $n=4$, $G\cong K_4$, that is $G^1$. When $n=5$, $G$ contains $W_4$ as a subgraph, by Lemma~\ref{o} (1) (5) (6), $G$ is $Z_3$-connected.

By Lemma~\ref{n-2} and ~\ref{n-1}, we only need prove the case of $G$ which satisfies the Ore-condition.
By Theorem~\ref{ore} and since $G$ is 3-edge-connected, either $G$ is $Z_3$-connected or $G$ is one of graphs $\{G^2, G^3, G^4, G^5\}$ shown in Fig. 1.
Thus we prove that $G$ is not one of the 18 special graphs shown in Fig. 1
if and only if  $G$ can be $Z_3$-contracted to one of the graphs $\{K_1,
K_4\}$.
$\blacksquare$

\small

\end{document}